%
%
%


\documentclass{conm-p-l}

\usepackage{amsmath}
\usepackage{amsfonts}
\usepackage{amssymb}
\usepackage{graphicx}
\usepackage{comment}
\usepackage{xcolor}

\newtheorem{theorem}{Theorem}

\newtheorem{example}[theorem]{Example}

\newtheorem{lemma}[theorem]{Lemma}

\newtheorem{proposition}[theorem]{Proposition}
\newtheorem{remark}[theorem]{Remark}





\addtolength{\footskip}{.3in} \addtolength{\oddsidemargin}{-.5in}
\addtolength{\evensidemargin}{-.5in} \addtolength{\textwidth}{1in}

\allowdisplaybreaks[1]

\newtheorem{thm}{Theorem}[section]

\theoremstyle{definition}

\numberwithin{equation}{section}


\newcommand{\p}{\partial}

\def\O{\Omega}
\newcommand{\C}{\mathbb C}

\newcommand{\R}{\mathbb R}
\newcommand{\Z}{\mathbb Z}

\def \l{\lambda}
\def \a{\alpha}

\newcommand{\e}{{\bf e}}

\newcommand{\G}{\Gamma}
\def \L{\Lambda}
\newcommand{\n}{\mathfrak n}

\def\p{\mathfrak p}

\def \j {\mathbf j}

\newcommand{\La}{\Lambda}

\renewcommand{\H}{\mathcal H}

\newcommand{\resumename}{R\'esum\'e}

\begin{document}

\title[Decomposition]{Decompositions of Generalized Wavelet Representations}


\author[B. Currey]{Bradley Currey}
\address{Dept.\ of Mathematics and Computer Science\\
St. Louis University\\
St. Louis, MO 63103 U.S.A.\\} 
\email{curreybn@slu.edu}

\author[A. Mayeli]{Azita Mayeli}
\address{Mathematics Department, Queensborough C. College,  City University of New
York, Bayside, NY 11362 U.S.A}
\email{amayeli@qcc.cuny.edu}

\author[V. Oussa]{Vignon Oussa}
\address{Dept.\ of Mathematics\\
Bridgewater State University\\
Bridgewater, MA  02324 U.S.A.\\} 
\email{Vignon.Oussa@bridgew.edu}

\subjclass[2000]{Primary }

\date{}

\begin{abstract}
Let $N$ be a simply connected, connected nilpotent Lie group which admits a
uniform subgroup $\Gamma.$ Let $\alpha$ be an automorphism of $N$ defined by
$\alpha\left(  \exp X\right)  =\exp AX.$ We assume that the linear action of
$A$ is diagonalizable and we do not assume that $N$ is commutative. Let $W$ be a
unitary wavelet representation of the semi-direct product
group $\left\langle \cup_{j\in\mathbb{Z}}\alpha^{j}\left(  \Gamma\right)  \right\rangle \rtimes\left\langle
\alpha\right\rangle $ defined by $W\left(  \gamma,1\right)  =f\left(
\gamma^{-1}x\right)  $ and $W\left(  1,\alpha\right)  =\left\vert \det
A\right\vert ^{1/2}f\left(  \alpha x\right)  .$ We obtain a decomposition of
$W$ into a direct integral of unitary representations. Moreover, we provide an
explicit unitary operator intertwining the representations, a precise
description of the representations occurring, the measure used in the direct
integral decomposition and the support of the measure. We also study the
irreducibility of the fiber representations occurring in the direct integral
decomposition in various settings. We prove that in the case where $A$ is an expansive automorphism then the decomposition of $W$ is in fact a direct integral of unitary irreducible representations each occurring with infinite multiplicities if and only if $N$ is non-commutative. This work naturally extends results obtained by H. Lim, J.
Packer and K. Taylor who obtained a direct integral decomposition of $W$ in
the case where $N$ is commutative and the matrix $A$ is expansive, i.e. all
eigenvalues have absolute values larger than one. 
\end{abstract}
\keywords{wavelets, nilpotent, Lie groups, Direct integral, decomposition}

\subjclass[2010]{22E25}
\maketitle


\section{Introduction} 
The main purpose of this paper is to present an abstract harmonic analysis approach to the theory of wavelets on both commutative and non commutative groups. Although in the past few years, there has been extensive work done to extend the concept of wavelets to non-commutative domains \cite{Fuhr cont,Currey,Currey1,CurreyMayeli,CurreyTom,Ous} this theory is not entirely well-understood. We recall that, in its classical definition, a wavelet system is an orthonormal basis generated by a combination of integral shifts and dyadic dilations of a
single function in $L^{2}\left(\mathbb{R}\right).$  Although the set of operators involved in a wavelet system does not form a group, it generates a group which is isomorphic to a subgroup of the one-dimensional affine group also known as the $ax+b$ group. To be more precise, let $
\Gamma_{2}=\left\{  m2^{n}\in\mathbb{Q}:m,n\in\mathbb{Z}\right\}$ and let $
\varphi:\mathbb{Z}\rightarrow\mathrm{Aut}\left(  \Gamma_{2}\right)$ be defined such that $
\varphi\left(  m\right)  \gamma=2^{-m}\gamma$ for $\gamma\in\Gamma_{2}$ and $m\in\mathbb{Z}.$  We define the unitary operators $
D,$ and $T$ such that 
\begin{align*}
Df\left(  t\right)    =\sqrt{2}f\left(  2t\right) ,\text{ and }
T_{\gamma}f\left(  t\right)     =f\left(  t-\gamma\right).
\end{align*}
An orthonormal wavelet is a unit vector $\psi\in L^{2}\left(
\mathbb{R}
\right)  $ such that
$
\left\{  D^{n}T_{k}\psi:n\in%
\mathbb{Z}
,k\in%
\mathbb{Z}
\right\}
$
forms an orthonormal basis for $L^{2}\left(
\mathbb{R}
\right)  .$ It is not too hard to see that the subgroup of $\mathcal{U}\left(
L^{2}\left(
\mathbb{R}
\right)  \right)  $ which is generated by the operators $D$ and $T$
is isomorphic to $\Gamma_{2}\rtimes_{\varphi}%
\mathbb{Z}
$ via the faithful representation%
\[
W:\left(  \gamma,n\right)  \longmapsto T_{\gamma}D^{n}.
\]
Moreover,  $\Gamma_{2}\rtimes_{\varphi}%
\mathbb{Z}
$ is a finitely generated metabelian solvable group generated by two elements and $\Gamma_2\rtimes_{\varphi}\Z$ has the following finite presentation: $$\langle a,m: mam^{-1}=a^{2} \rangle.$$  The representation $W$ of this group was termed the \textit{wavelet representation} by L-H. Lim, J. Packer and K. Taylor in \cite{Taylor}. In \cite{Taylor}, the authors obtained a direct integral decomposition of $W$ into its irreducible
components which are some monomial representations parametrized by a wavelet
set. More precisely, they show that $W$ is equivalent to
\[
\int_{E}^{\oplus}\mathrm{Ind}_{\Gamma_{2}}^{\Gamma_{2}\rtimes_{\varphi}%
\mathbb{Z}
}\left(  \chi_{t}\right)  \text{ }dt.
\]
The representations occurring in the direct integral decomposition are irreducible, and $E$ is a subset of $\mathbb{R}$ which up to a null set tiles the real line by both dyadic dilations and integral
translations. Their results were actually obtained for the more general case
where
\[
D,T_{k}:L^{2}\left(\mathbb{R}^{n}\right)  \rightarrow L^{2}\left(\mathbb{R}^{n}\right)
\]
such that
\[
Df\left(  t\right)  =\det\left(  A\right)  ^{1/2}f\left(  At\right)  ,\text{
}T_{k}f\left(  t\right)  =f\left(  t-k\right),
\]
$A$ is an expansive matrix (all eigenvalues have absolute value greater
than one) in $M\left(  n,%
\mathbb{Z}
\right)  \cap GL\left(  n,%
\mathbb{Q}
\right)  $ and $k\in%
\mathbb{Z}
^{n}.$

The present work undertakes a thorough investigation of the case where $%
\mathbb{R}
^{n}$ is replaced by a simply connected, connected nilpotent
Lie group $N$. In this work, we do not assume that $N$ is commutative. If $N$ is commutative, then $N=\R^n$ and the main purpose of this paper is to generalize the results obtained in \cite{Taylor}. However, in this more general setting, it is not straightforward to define a wavelet representation acting in $L^{2}\left(  N\right).$ In order to summarize our results, we must first take care of some technical issues.

Let $\n$ be an algebra of $m\times m$ nilpotent real matrices and set $N = \exp \n$. Then $\n$ is a nilpotent Lie subalgebra of $\mathfrak{gl}(m,\R)$, $N$ is a closed, simply connected nilpotent subgroup of $GL(m,\R)$ and the matrix exponential $\exp : \n \rightarrow N$ is a bijection. 
A basis $\{X_1, X_2, \dots , X_n\}$ for $\n$ can be chosen in such a way that $\n_i = \text{ span}_\R \{ X_1, X_2, \dots , X_i\}$ is an ideal in $\n$. Such a basis is called a strong Malcev basis, and from now on, we fix such a basis for $\n$. Thus $\n$ is identified with $\R^n$, and we let $dX$ denote the Lebesgue measure on $\n$. The mapping
$X \mapsto \exp X$ is a homeomorphism, as is the mapping
$$
(t_1, t_2 ,\dots , t_n) \mapsto \exp t_1 X_1 \exp t_2X_2 \cdots \exp t_nX_n.
$$
Both coordinate systems induce the same bi-invariant, Borel measure $\mu$ on $N$ as the push forward of Lebesgue measure. 

Let $N$ be a connected nilpotent Lie group. A discrete subgroup $\Gamma$ is a \textit{uniform subgroup} if $N/\Gamma$ is compact.  Since nilpotent Lie groups are unimodular, then it can be shown that $N/\Gamma$ has a finite volume if $N/\Gamma$ is compact.  Not every simply connected nilpotent Lie group admits a uniform subgroup. A necessary and sufficient condition for existence of a uniform subgroup is that there is a strong Malcev basis for which the associated structure constants are rational numbers. In this case such a basis can be chosen so that 
$$
\G =  \exp \Z X_1\exp \Z X_2 \cdot \cdots   \exp\Z X_n
$$
and henceforth we assume that this is the case. 


Each element $x \in N$ defines a unitary translation operator $T_x$ on $L^2(N)$ by
$$
(T_x f)(y) = f(x^{-1} y).
$$
In order to define dilations in $L^2(N)$, we fix a Lie algebra automorphism $A \in GL(\n)$. Via the chosen basis for $\n$, $A$ is given by an invertible $n\times n$ real matrix and we write evaluation of $A$ on elements of $\n$ multiplicatively. Now, we will assume that $A$ defines an automorphism $\a$ of $N$ by $
\a(\exp X) = \exp AX$ such that $$AX_k=a_k X_k, a_k \in \R. $$ It follows from our assumption that $A$ is a diagonal matrix with respect to the fixed strong Malcev basis for the Lie algebra $\n$. For any integrable function $f$ on $N$,
$$
\int_N f(\a x) |\det A| d\mu(x) = \int_\n f(\exp AX) |\det A| dX = \int_\n f(\exp X ) dX = \int_N f(x) d\mu(x).
$$
Define
the unitary operator $D$ on $L^2(N)$ by 
$$
Df(x) = |\det A |^{\frac{1}{2}}  \ f(\a x ).
$$
Let $\mathcal G(D, T, \G)$ denote the subgroup of the group $\mathcal U(L^2(N))$ of unitary operators on $L^2(N)$, generated by the operators $D$ and $ T_\gamma, \gamma \in \G$. Then  $\mathcal G(D, T,\G)$ is the image of a unitary representation of the group $G = \Gamma_\a \rtimes H$, where $H = \left\langle \alpha\right\rangle $ is the free abelian group
generated by $\alpha$, and $\G_\a$ is the subgroup of $N$ generated by 
$$
\cup_{m\in \Z} \a^m(\G).
$$
$G$ is a subgroup of the 
semi-direct product $F=N\rtimes H$ with operation
\[
\left(  x_{1},\alpha^{m_{1}}\right)  \left(  x_{2},\alpha^{m_{2}}\right)
=\left(  x_{1}\alpha^{-m_{1}}x_{2},\alpha^{m_{1}+m_{2}}\right), \ \ \left(  x_{1},\alpha^{m_{1}}\right)  ,\left(  x_{2},\alpha^{m_{2}}\right)
\in F.
\]
The mapping  $V:F\rightarrow \mathcal U\left(  L^{2}\left(N\right)  \right)  $ defined by
\[
V\left(  x,\alpha^{m}\right)  =T_{x}D_{\alpha}^{m}
\]
is a unitary representation of $F$ acting in $L^2(N)$, and 
the {\it wavelet representation} is the restriction of $V$
to $G$. Write $W=V|_{G}.$ 

 The present work is organized as follows. The second section contains some standard preliminary work. In the third section, we provide a direct integral decomposition of the representation $W$ into smaller components. We describe the unitary representations occurring, the measure used in the direct integral and the support of the measure as well. We show that $W$ is equivalent to a measurable field of unitary representations over a set which tiles the unitary dual of $N$ by dilation. In the fourth section, we deal with the irreducibility of the representations occurring in the direct integral decomposition. We prove that in the case where $A$ is an expansive automorphism then the decomposition of $W$ is in fact a direct integral of unitary irreducible representations, each occurring with infinite multiplicities if and only if $N$ is non-commutative. We also discuss some surprising results derived from Bekka and Driutti's work \cite{Bekka}. More precisely, we show that there are instances where the irreducibility of representations occurring in the decomposition of $W$ is completely independent of the nature of the dilation action coming from $H.$ That is, there are examples (for non-commutative $N$) where the irreducibility (or reducibility) of the representations occurring in the decomposition of $W$ only depends on the structure constants of the Lie algebra $\n.$ Several examples are presented throughout the paper to help the reader follow the stream of ideas.
 

\section{Preliminaries}
Recall that the Fourier transform $f \mapsto \hat f$ on $L^1(\R^n)$ satisfies
$$
\widehat{T_x f}(\l) = e^{2\pi i \langle x,\l \rangle} \hat f(\l)
$$
and the set $\hat \R^n = \{ x \mapsto e^{2\pi i \langle x,\l \rangle} : \l \in \R^n\} $ of exponentials is precisely the set of continuous unitary homomorphisms of $\R^n$ into $\C^*$. 
To define the group Fourier transform for the present class of simply connected nilpotent groups $N$, we denote by $\hat N$ the space of equivalent classes of strongly continuous, irreducible unitary representations of $N$, where $\pi_1 : N \rightarrow  \mathcal U(\mathcal H_1)$ and  $\pi_2 : N \rightarrow  \mathcal U(\mathcal H_2)$ are equivalent if there is a unitary operator $U : \mathcal H_1 \rightarrow \mathcal H_2$ such that $U \circ \pi_1(x) = \pi_2(x) \circ U$ holds for all $x \in N$.  

Given an irreducible unitary representation $\pi : N \rightarrow \mathcal U\left(\mathcal H\right)$ and $f \in L^1(N)$, we define the operator $\hat f(\pi)$ on $\mathcal H$ by
$$
\hat f(\pi) = \int_N f(x) \pi(x) d\mu(x)
$$
where the integral is taken in the weak sense.  One can show that $\hat f(\pi)$ is trace-class, and by the translation invariance of $\mu$, we have 
$$
\widehat{T_xf}(\pi) = \pi(x) \circ \hat f(\pi)
$$
holds for all $x \in N$.  

Just as the Fourier transform on $L^1(\R^n) \cap L^2(\R^n)$ extends to a unitary isomorphism of $L^2(\R^n)$ with $L^2(\hat\R^n)$, the group Fourier transform provides a similar isomorphism by which we study the wavelet representation on $L^2(N)$. Although the topology of $\hat \R^n$ is exactly the same at that of $\R^n$,  the topological structure of $\hat N$ is not even Hausdorff if $N$ is not commutative. In fact there is a canonical homeomorphism $\kappa : \n^* / \text{Ad}^*(N) \rightarrow \hat N$, where $\text{Ad}^* : N \rightarrow GL(\n^*)$ is the coadjoint representation of $N$ acting on the linear dual $\n^*$ of $\n$. In this case, it is natural to parametrize a conull subset of $\hat N$ by an explicit subset $\La$ of $\n^*$ which is a cross-section for almost all of the coadjoint orbits. We make the following somewhat technical digression in order to describe the explicit Plancherel isomorphism. See \cite{Corwin} for more details and the original sources.

For a set $X$ we denote by $|X|$ the number of elements in the set $X.$ Any coadjoint orbit has the structure of a symplectic manifold, and hence is even dimensional. Let $2d $ be the maximal dimension of the coadjoint orbits. One has the following:

\begin{itemize}

\item[a)]subsets $\j \subset \e \subset  \{ 1, 2, \dots , n\}$ such that $| \e | = 2 | \mathbf{j} | = 2d$, $$\j = \{j_1<j_2<\cdots <j_d\}$$

\item[b)]  an $\text{Ad}^*(N)$-invariant Zariski open subset $\O$ whose orbits have dimension $2d$ and such that $\Lambda=M\cap \O$ is a cross-section for the coadjoint orbits in $\O$, where $$M = \{ \ell \in \n^* : \ell(X_j) = 0,\text{  for all } j \in \e\}$$ 

\item[c)] for each $\l \in \La$, an analytic subgroup $P(\l)$ of $N$ such that

\begin{itemize}

\item[c1)] $(t_1,t_2, \dots , t_d) \mapsto \exp t_1X_{j_1} \exp t_2X_{j_2} \cdots \exp t_dX_{j_d} \cdot  P(\l)$
is a homeomorphism of $\R^d$ with $N / P(\l)$, 

\item[c2)] $\chi_\l(\exp Y) = e^{2\pi i\l(Y)}$ defines a unitary character of $P(\l)$,

\item[c3)] the unitary representation $\pi_\l$ of $N$ induced from $P(\l)$ by $\chi_\l$ is irreducible and associated with $\l$ under the canonical mapping $\kappa$.

\end{itemize}

\end{itemize}

\noindent
The subalgebra $\p(\l) = \log(P(\l))$ is defined by
\begin{equation}\label{polar}
\p(\l) = \sum_{i = 1}^n \n_i(\l)
\end{equation}
where \begin{equation}\label{radical} \n_i(\l) = \{ Y \in \n_i : \l [X,Y] = 0\text{ for all } X\in \n_i\}.\end{equation} 
Now, let $q:N\to N/P(\lambda)$ be the canonical quotient map. 
For each $\lambda \in \La,$ the Hilbert space for the induced representation $\pi_\l$ is the completion $\mathcal H_\l$ of the space of complex-valued functions $g$ on $N$ satisfying the following:
\begin{enumerate}
\item the image of the support of $g$ through the quotient map $q$ is compact
\item $g(xp) = \chi_\l(p)^{-1} g(x), \ \ x\in N, p \in P(\l)$
\item $\int_{\R^d} | g(\exp t_1X_{j_1} \exp t_2X_{j_2} \cdots \exp t_dX_{j_d} ) |^2 dt < \infty.$
\end{enumerate}

The induced representation $\pi_\l$ is just the action of left translation: $\pi_\l(y)g(x) = g(y^{-1} x)$, and the mapping $U : \mathcal H_\l \rightarrow L^2(\R^d)$ defined by 
$$
Ug(t_1, t_2, \dots , t_d) = g(\exp t_1X_{j_1} \exp t_2X_{j_2} \cdots \exp t_dX_{j_d} ) 
$$
is an isomorphism of Hilbert spaces.

\begin{remark}
If $N $ is abelian, then  $\mathbf{e} = \emptyset$, $d = 0$, and $P(\l) = N$ for all $\l$. In this case the dual $\hat N$ of $N = \R^n$ is just the set $\hat \R^n$ defined above. 
\end{remark}

For $f\in L^{1}\left(  N\right)  \cap L^{2}\left(  N\right),$ we put $
\widehat{f}\left(  \lambda\right)  =\widehat{f}\left(  \pi_{\lambda}\right)$ 
and the map $f\mapsto\widehat{f}$ extends to a unitary operator
\[
\mathcal{P}:L^{2}\left(  N\right)  \rightarrow\int_{\Lambda}^{\oplus
}\mathcal{HS}\left(  L^{2}\left(\mathbb{R}^{d}\right)  \right)  \text{ }\left\vert \mathbf{P}\left(  \lambda\right)
\right\vert d\lambda
\]
where $\mathbf{P}$ is a non-vanishing polynomial function on $\Lambda$ and $\mathcal{HS}=L^2(\R^d)\otimes L^2(\R^d).$ Next,
we consider the map
\[
\mathcal{P}_{1}:L^{2}\left(  N\right)  \rightarrow\int_{\Lambda}^{\oplus
}\mathcal{HS}\left(  L^{2}\left(\mathbb{R}^{d}\right)  \right)  \text{ }d\lambda
\]
such that
\[
\mathcal{P}_{1}f=\left(  \widehat{f}\left(  \lambda\right)  \sqrt{\left\vert
\mathbf{P}\left(  \lambda\right)  \right\vert }\right)  _{\lambda\in\Lambda}.
\]
Then it is also clear that the map $\mathcal{P}_{1}$ is a unitary map obtained
by modifying the Plancherel transform $\mathcal{P}.$ \vskip 0.5cm

We shall now present a few examples of nilpotent Lie groups, their duals and the associated Plancherel measures.

\begin{example}\label{Heisenberg}
\end{example} \noindent
The Heisenberg group $N$ has as a basis for its Lie algebra $\{Z, Y, X\}$ where $[X,Y] = Z$ and all other brackets vanish. Putting $X_1 = Z, X_2 = Y, X_3 = X$, we see that the structure constants for this basis are rational and hence $\n$ has a rational structure. If we identify $N$ with $\R^3$ via coordinates  $n = (z,y,x) = \exp z Z \exp y Y \exp xX$, then the set  $\Gamma$ of integer points in $N$ is a uniform discrete subgroup of $N$ and the above basis is strongly based in $\Gamma$. Now, in this case $\e = \{2, 3\}$, and $\j = \{3\}$. Explicitly, 
$$
\O = \{ \ell \in \n^* : \ell(Z) \ne 0\}
$$
and 
$$
\La = \{ \lambda Z^* : \lambda \ne 0\} \simeq \R \setminus \{0\}.
$$
Note also that  $|\mathbf{P}(\ell )| = |\ell(Z)|$ in this example.

\begin{example}
\end{example} \noindent
Let $N$ be the upper triangular group of $4\times 4$ matrices. A typical
element of the Lie algebra $\mathfrak{n}$ is of the form
\[
\left[
\begin{array}
[c]{cccc}%
0 & u_{1} & y_{1} & z\\
0 & 0 & u_{2} & y_{2}\\
0 & 0 & 0 & u_{3}\\
0 & 0 & 0 & 0
\end{array}
\right]  .
\]
In fact, $\mathfrak{n}$ is spanned by the basis
\[
\left\{  Z,Y_{1},Y_{2},U_{1},U_{2},U_{3}\right\}
\]
with the following non-trivial Lie brackets%
\begin{align*}
\left[  U_{1},U_{2}\right]    & =Y_{1},\left[  U_{3},U_{2}\right]  =-Y_{2}\\
\left[  U_{1},Y_{2}\right]    & =Z,\left[  U_{3},Y_{1}\right]  =-Z.
\end{align*}
Put
\[
X_{1}=Z,X_{2}=Y_{1},X_{3}=Y_{2},X_{4}=U_{1},X_{5}=U_{2},X_{6}=U_{3}.
\]
We define
\[
\Gamma=\left\{  \left[
\begin{array}
[c]{cccc}%
1 & k_{4} & k_{2} & k_{1}\\
0 & 1 & k_{5} & k_{3}\\
0 & 0 & 1 & k_{6}\\
0 & 0 & 0 & 1
\end{array}
\right]  :k_{i}\in\mathbb{Z}\right\}.
\]
It is easy to see that $\Gamma$ is a discrete uniform subgroup of $N$ and the basis given above is strongly based in $\Gamma.$ Next,  let $\mathbf{e}=\left\{  2,3,4,6\right\} .$ Then
\[
\Omega=\left\{  \lambda\in\mathfrak{n}^{\ast}%
:\lambda\left(  Z\right)  \neq0\right\}
\]
and
\[
\Lambda=\left\{  \lambda\in\Omega:\lambda\left(  Y_{1}\right)  =\lambda\left(
Y_{2}\right)  =\lambda\left(  X_{1}\right)  =\lambda\left(  X_{3}\right)
=0\right\}  \simeq\mathbb{R}^{\ast}\times\mathbb{R}.
\]
Finally, the Plancherel measure is given by $
\left\vert \lambda_{1}\right\vert ^{2}d\lambda_{1}d\lambda_{2}$ on $\mathbb{R}^{\ast}\times\mathbb{R}.$

\begin{example}
\end{example} \noindent
Let $N$ be a nilpotent Lie subgroup of $GL\left(  10,\mathbb{R}\right)  $ such that a typical element of $N$ is of the form
\[
\left[
\begin{array}
[c]{cccccccccc}%
1 & 0 & 0 & x_{1} & x_{2} & x_{3} & -y_{1} & 0 & -y_{2}-y_{3} & z_{1}\\
0 & 1 & 0 & x_{2} & x_{3} & x_{2} & -y_{2} & -y_{1}-y_{3} & 0 & z_{2}\\
0 & 0 & 1 & x_{3} & x_{1} & x_{1} & -y_{3} & -y_{2} & -y_{1} & z_{3}\\
0 & 0 & 0 & 1 & 0 & 0 & 0 & 0 & 0 & \frac{1}{2}y_{1}\\
0 & 0 & 0 & 0 & 1 & 0 & 0 & 0 & 0 & \frac{1}{2}y_{2}\\
0 & 0 & 0 & 0 & 0 & 1 & 0 & 0 & 0 & \frac{1}{2}y_{3}\\
0 & 0 & 0 & 0 & 0 & 0 & 1 & 0 & 0 & \frac{1}{2}x_{1}\\
0 & 0 & 0 & 0 & 0 & 0 & 0 & 1 & 0 & \frac{1}{2}x_{2}\\
0 & 0 & 0 & 0 & 0 & 0 & 0 & 0 & 1 & \frac{1}{2}x_{3}\\
0 & 0 & 0 & 0 & 0 & 0 & 0 & 0 & 0 & 1
\end{array}
\right]  .
\]
The Lie algebra of $N$ is spanned by
\[
\left\{  Z_{1},Z_{2},Z_{3},Y_{1},Y_{2},Y_{3},X_{1},X_{2},X_{3}\right\}
\]
with non-trivial Lie brackets
\begin{align*}
\left[  X_{1},Y_{1}\right]   &  =Z_{1},\left[  X_{1},Y_{2}\right]
=Z_{2},\left[  X_{1},Y_{3}\right]  =Z_{3}\\
\left[  X_{2},Y_{1}\right]   &  =Z_{2},\left[  X_{2},Y_{2}\right]
=Z_{3},\left[  X_{2},Y_{3}\right]  =Z_{2}\\
\left[  X_{3},Y_{1}\right]   &  =Z_{3},\left[  X_{3},Y_{2}\right]
=Z_{1},\left[  X_{3},Y_{3}\right]  =Z_{1}.
\end{align*}
Therefore
\[
\Lambda=\left\{
\begin{array}
[c]{c}%
\lambda\in\mathfrak{n}^{\ast}:\det\left[
\begin{array}
[c]{ccc}%
\lambda\left(  Z_{1}\right)  & \lambda\left(  Z_{2}\right)  & \lambda\left(
Z_{3}\right) \\
\lambda\left(  Z_{2}\right)  & \lambda\left(  Z_{3}\right)  & \lambda\left(
Z_{2}\right) \\
\lambda\left(  Z_{3}\right)  & \lambda\left(  Z_{1}\right)  & \lambda\left(
Z_{1}\right)
\end{array}
\right]  \neq0,\lambda\left(  X_{i}\right)  =\lambda\left(  Y_{i}\right)  =0\\
1\leq i\leq3
\end{array}
\right\}
\]
and the Plancherel measure is equivalent to
\[
\left\vert \left(  \lambda_{1}-\lambda_{3}\right)  \left(  \lambda_{2}%
-\lambda_{3}\right)  \left(  \lambda_{1}+\lambda_{2}+\lambda_{3}\right)
\right\vert d\lambda_{1}d\lambda_{2}d\lambda_{3}
\]
defined over a Zariski open subset of $\mathbb{R}^{3}.$ 
For each $\lambda \in \Lambda$ the corresponding irreducible representation $\pi_{\lambda}$ is realized as acting on $L^{2}\left(\mathbb{R}^{3}\right) $ as follows (see \cite{Oussa2} for more details)
$$
\pi_{\lambda}\left(  \exp\left(  t_{1}X_{1}+t_{2}X_{2}+t_{3}X_{3}\right)
\right)  f\left(  x_{1},x_{2},x_{3}\right)   =f\left(  x_{1}-t_{1}%
,x_{2}-t_{2},x_{3}-t_{3}\right) ,$$
\begin{align*}
&\pi_{\lambda}\left(  \exp\left(  t_{1}Y_{1}+t_{2}Y_{2}+t_{3}Y_{3}\right)
\right)  f\left(  x_{1},x_{2},x_{3}\right)\\ &=e^{\left(  -2\pi
i\left\langle \left[
\begin{array}
[c]{c}%
\lambda_{1}t_{1}+\lambda_{2}t_{2}+\lambda_{3}t_{3}\\
\lambda_{2}t_{1}+\lambda_{2}t_{3}+\lambda_{3}t_{2}\\
\lambda_{1}t_{2}+\lambda_{1}t_{3}+\lambda_{3}t_{1}%
\end{array}
\right]  ,\left[
\begin{array}
[c]{c}%
x_{1}\\
x_{2}\\
x_{3}%
\end{array}
\right]  \right\rangle \right)  }f(x_1,x_2,x_3)  ,
\end{align*}
and
\begin{align*}
&\pi_{\lambda}\left(  \exp\left(  t_{1}Z_{1}+t_{2}Z_{2}+t_{3}Z_{3}\right)
\right)  f\left(  x_{1},x_{2},x_{3}\right)\\ & =e^{\left(  2\pi i\left\langle
\left[
\begin{array}
[c]{c}%
\lambda_{1}\\
\lambda_{2}\\
\lambda_{3}%
\end{array}
\right]  ,\left[
\begin{array}
[c]{c}%
t_{1}\\
t_{2}\\
t_{3}%
\end{array}
\right]  \right\rangle \right)  }f\left(  x_{1},x_{2},x_{3}\right).
\end{align*}
To see more examples, we refer the reader to the book by Corwin and Greenleaf \cite{Corwin} which contains several other explicit examples.


\section{Direct Integral Decompositions}
In order to define a unitary dilation $\hat D$ on the Fourier transform side, we consider the action of the dilation group on $\hat N$. Let $\pi$ be an irreducible representation of $N$ and let $\a^m \in H$. Define $\a^m \cdot \pi$ by
$$
\a^m \cdot \pi = \pi \circ \a^m.
$$
Then $\a^m \cdot \pi$ is irreducible, and may or may not be isomorphic with $\pi$. Thus we have an action of $H$ on $\hat N$ that may well be non-trivial. 

At the same time $H$ acts naturally on $\n^*$ by $\alpha^m \cdot \lambda (X) = \lambda(A^mX)$, and we write $\alpha^{m}\cdot\lambda=A^{m}\lambda.$ Furthermore, since it is assumed that $A$ is a diagonal matrix with respect to the fixed strong Malcev basis of the Lie algebra $\n$, then it is clear that $\L$ is $H$-invariant, and we claim that the parametrization $\l \mapsto \pi_\l$ is $H$-equivariant with respect to the action on $\hat N$ and $\La$. To see this, we observe that the definition (\ref{polar}) shows that
$$
P(A\l) = \a^{-1}(P(\l)).
$$
Next, we define a unitary representation $C$ of the group $H$ such that $C:H\to \mathcal{U}(\H_\l)$ and given  $g \in \H_\l$,  $C(\a)g(x) = g(\a(x)) |\det A |^{1/2}.$ Thus, for $p \in P(A\l), x \in N$, $C(\a)g(xp) = \chi_{A\l}(p)^{-1} C(\a)g(x)$, and one easily checks that for each $\a\in H,$ the map $C(\a) : \H_\l \rightarrow \H_{A\l}$ is a unitary isomorphism.  Moreover, for each $m \in \Z$, 
\begin{equation}
C (\a^m) \pi_\l (\cdot)= \pi_{A^m\l} (\cdot) C(\a^m).
\end{equation}
Finally, identifying the Hilbert space $H_\l$ with $L^2(\R^d),$ given   $g\in L^2(\R^d)$, we have the following:
$$
C(\a)g(t_1, t_2, \dots , t_d) = g(a_{j_1}t_1, a_{j_2}t_2, \dots , a_{j_d}t_d) \  |a_{j_1} a_{j_2} \cdots a_{j_d}|^{1/2}.
$$

\vskip 0.5cm
Let $\mathbb{I}$ be a countable set which is parameterizing an orthornormal basis for $L^2(\R^d).$ Fix such orthonormal basis
$$\mathcal{B}=\left\{  b_{\kappa}:\kappa\in\mathbb{I}\right\}  $$ for
$L^{2}\left(\mathbb{R}^{d}\right).$  It is worth noticing that in the case where $N$ is abelian, then $d=0$ and it is understood that the representation $C$ is simply the one-dimensional trivial representation. Moreover, if $N$ is commutative then $L^2\left(\R^d\right)=\C$ and $\mathbb{I}$ is a singleton. 
We recall that $$AX_k=a_k X_k$$ for some real numbers $a_k.$  $H $ \textit{acts trivially} on $\Lambda$ if and only if $a_k=1$ for all $k\not \in \mathbf{e}. $
\noindent So, we say that $H$ \textit{acts non-trivially} on $\Lambda$ if and only if there exists some index $k\not\in\mathbf{e}$ such that $a_k$ is not equal to $1.$
\begin{remark}
 Let $G$ be a group acting on a set $X$. We say that the action is {\rm effective} if $gx = x$ for all $x$ in $X$ implies that $g$ is the identity in $G$. Therefore, if $H$ acts non-trivially on $\Lambda$ then it must be the case that $H$ acts effectively on the set $\Lambda.$ \end{remark}

A measurable subset $E$ of $\L$ is called a {\it dilation tiling}  of $\L$ if $A^j E \cap A^m E = \emptyset$ for $j \ne m$, and $\cup_{m \in \Z} A^m E$ is conull in $\L$. We have the following.

\begin{proposition} Suppose that $H$ acts non-trivially on $\Lambda$, and let $E$ be a dilation tiling of $\L$. Then 
\[
V\simeq\int_{E}^{\oplus}\oplus_{\kappa\in\mathbb{I}
}\mathrm{Ind}_{N}^{N\rtimes H}\left(  \pi_{\lambda}\right)  d\lambda.
\]
\end{proposition}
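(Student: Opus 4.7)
The plan is to chase the wavelet representation $V$ through a sequence of unitary equivalences: first to the Plancherel/Fourier side via $\mathcal{P}_1$, then through a decomposition of each Hilbert--Schmidt fiber into column summands indexed by $\mathcal{B}$, and finally through a reindexing of the base $\Lambda$ by the tiling $E$. I would begin by computing the images $\widetilde T_x$ and $\widetilde D$ of $T_x$ and $D$ under $\mathcal{P}_1$. Using the covariance $\widehat{T_xf}(\pi_\lambda) = \pi_\lambda(x)\widehat f(\pi_\lambda)$, the translation $\widetilde T_x$ acts fiberwise on $\int_\Lambda^\oplus \mathcal{HS}(L^2(\mathbb{R}^d))\,d\lambda$ by left multiplication by $\pi_\lambda(x)$. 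A direct change-of-variable computation, together with equation~(1) and the definition of $C(\alpha)$, shows that $\widetilde D$ is a \emph{fiber-shifting} operator that sends a section $F(\lambda)$ at $\lambda$ to one at $A\lambda$ via conjugation by $C(\alpha)$.

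Next I would use the orthonormal basis $\mathcal{B} = \{b_\kappa\}_{\kappa \in \mathbb{I}}$ to split each Hilbert--Schmidt fiber as $\mathcal{HS}(L^2(\mathbb{R}^d)) \cong \oplus_{\kappa \in \mathbb{I}} L^2(\mathbb{R}^d)$, where the $\kappa$-th summand is the column $T \mapsto Tb_\kappa$. Because left multiplication preserves columns, the $N$-action becomes $\pi_\lambda(x)$ on each summand identified with $\mathcal H_\lambda$. Since $\widetilde D$ is an intertwiner between columns at $\lambda$ and at $A\lambda$ that does not mix different $\kappa$'s, this column decomposition is compatible with both the $N$- and the $H$-actions.

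Now I use the tiling hypothesis $\Lambda = \bigsqcup_{m\in\mathbb{Z}} A^m E$ (up to null sets) to break the base into pieces indexed by $\mathbb{Z}$. On each piece I change variables $\lambda = A^m\mu$ with $\mu \in E$, producing a Jacobian $|\det A|^m$. Defining the rescaling
\[
\widetilde F(\mu, m) := |\det A|^{m/2}\, F(A^m\mu), \qquad \mu \in E,\; m \in \mathbb{Z},
\]
yields a unitary isomorphism $\int_\Lambda^\oplus \oplus_\kappa \mathcal{H}_\lambda\,d\lambda \cong \int_E^\oplus \oplus_\kappa \oplus_m \mathcal{H}_{A^m\mu}\,d\mu$. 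Transporting the $N$-action gives $\pi_{A^m\mu}(x)$ on the $(\kappa,m)$-fiber. Transporting $\widetilde D$ produces an operator that shifts the index $m \mapsto m+1$ and applies $C(\alpha):\mathcal H_{A^m\mu}\to\mathcal H_{A^{m+1}\mu}$; here the $|\det A|^{m/2}$ rescaling, the $|\det A|^{1/2}$ normalization in $C(\alpha)$, and the $|\det A|^{-1/2}$ factor in $\widehat{Df}$ all cancel precisely so that the $H$-action is a clean shift.

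Finally, I identify each $\mu$-fiber $\oplus_\kappa \oplus_m \mathcal H_{A^m\mu}$, equipped with the actions just described, with $\oplus_\kappa \mathrm{Ind}_N^{N\rtimes H}(\pi_\mu)$, using the standard realization of the induced representation on $\ell^2(\mathbb{Z},\mathcal H_\mu)$: the $m$-th summand carries $\pi_\mu \circ \alpha^m \cong \pi_{A^m\mu}$, and $\alpha$ shifts the index. The family $\{C(\alpha^m):\mathcal H_\mu \to \mathcal H_{A^m\mu}\}$ provides the explicit intertwiner between these two models. The main obstacle I expect is exactly the bookkeeping of the three factors $|\det A|^m$, $|\det A|^{1/2}$, and $|\det A|^{-1/2}$ and their consistent distribution so that both the $N$-equivariance and the shift structure of the induced representation come out on the nose; the non-triviality of the $H$-action on $\Lambda$ is needed precisely to guarantee that a dilation tiling $E$ of positive measure exists, since otherwise the orbits of $A$ in $\Lambda$ are singletons and no such $E$ is available.
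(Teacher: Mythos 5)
Your overall architecture (pass to the Fourier side via $\mathcal{P}_1$, split each Hilbert--Schmidt fiber into columns, change base along the tiling, identify each fiber with an $\ell^2(\mathbb{Z},L^2(\mathbb{R}^d))$-realization of the induced representation) parallels the paper's proof, but there is a genuine gap at the column-splitting step. On the Fourier side the dilation acts in each fiber by \emph{conjugation}: for a rank-one section $g(\lambda)=u_\lambda\otimes b_\kappa$ one has
\[
\widehat{V}(m)\,g(\lambda)\;=\;\delta(m)^{1/2}\,C(\alpha^m)u_{A^{-m}\lambda}\otimes C(\alpha^m)b_\kappa,
\]
and $C(\alpha^m)$ is a genuine dilation of $L^2(\mathbb{R}^d)$ (it rescales the variables by $a_{j_1}^m,\dots,a_{j_d}^m$), so $C(\alpha^m)b_\kappa$ is not proportional to $b_\kappa$, nor to any other fixed basis vector. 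Concretely, the $\kappa$-th column of $\widehat{V}(m)g$ at $\lambda$ is $C(\alpha^m)\,g(A^{-m}\lambda)\,C(\alpha^m)^{-1}b_\kappa$, which expands over \emph{all} columns $\kappa'$ of $g(A^{-m}\lambda)$. So your assertion that $\widetilde D$ ``does not mix different $\kappa$'s'' fails, the fixed-basis column decomposition is not $H$-invariant, and the clean shift structure you transport in your third step is not yet available.

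The missing idea --- and the heart of the paper's argument --- is to let the multiplicity vector depend on $\lambda$ covariantly, using the tiling \emph{before} the multiplicity splitting rather than after. The paper sets $v^{\kappa_0}_\lambda=\sum_{j\in\mathbb{Z}}C(\alpha^{-j})b_{\kappa_0}1_{A^{-j}E}(\lambda)$, which satisfies the covariance $C(\alpha^m)v^{\kappa_0}_{A^{-m}\lambda}=v^{\kappa_0}_\lambda$, so that the spaces $\mathcal{H}_{\kappa_0}=\{\lambda\mapsto u_\lambda\otimes v^{\kappa_0}_\lambda\}$ are invariant under both $\widehat{V}(x)$ and $\widehat{V}(m)$. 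Equivalently: you must trivialize the cocycle $m\mapsto C(\alpha^m)$ acting on the second tensor factor over the $\mathbb{Z}$-action on $\Lambda$, and the tiling $E$ is exactly what makes this possible (apply $1\otimes C(\alpha^{j})$ on the slice $A^{j}E$). Once this correction is inserted, the rest of your plan --- the base change $\widetilde F(\mu,m)=|\det A|^{m/2}F(A^m\mu)$, the identification of the $m$-graded fiber with $\mathrm{Ind}_N^{N\rtimes H}(\pi_\mu)$ via the family $\{C(\alpha^m)\}$, and the determinant bookkeeping --- coincides with the paper's intertwiner $\Phi g(\lambda)=\left(C(\alpha)^{-j}u^{g}_{A^{j}\lambda}\,|\det A|^{j/2}\right)_{j\in\mathbb{Z}}$ and goes through; your closing remark on the role of non-triviality of the $H$-action (ensuring a dilation tiling can exist at all) is consistent with how the hypothesis is used.
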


\begin{proof}
We define a representation $\widehat{V}$ of the group $F$ as $\widehat{V}\left(  \cdot\right)  =\mathcal{P}_{1}V\left(  \cdot\right)
\mathcal{P}_{1}^{-1}.$ Since $\mathcal{P}_{1}$ is a unitary map then clearly $\widehat{V}$ and $V$ are equivalent representations. Moreover, $\widehat{V}$ is acting in the Hilbert space $\int_{\Lambda}^{\oplus}L^2(\R^d)\otimes L^2(\R^d) d\lambda.$ This proof will be structured as follows. First, we will show that $L^2(N)$ can be decomposed into a direct sum of multiplicity-free spaces which are stable under the action of the representation $V.$ Second, we will obtain a decomposition of the restriction of $V$ on each multiplicity-free subspace. Fix $\kappa_{0}\in\mathbb{I}$. Let $v=v^{\kappa_{0}}:\Lambda\rightarrow
L^{2}\left(\mathbb{R}^{d}\right)  $ be a measurable vector-valued function defined such that
\[
v\left(  \lambda\right)  =\sum_{j\in\mathbb{Z}}C\left(  \alpha^{-j}\right)  b_{\kappa_{0}}1_{A^{-j}E}\left(  \lambda\right).
\]
We write $v\left(  \lambda\right)  =v_{\lambda}.$ Now, we define the multiplicity-free Hilbert space
\[
\mathcal{H}_{\kappa_{0}}=\left\{  \lambda\mapsto u_{\lambda}\otimes
v_{\lambda}^{\kappa_{0}}\in L^{2}\left(  \Lambda,\mathcal{HS}\left(
L^{2}\left(\mathbb{R}^{d}\right)  \right)  ,d\lambda\right)  :u_{\lambda}\in L^{2}\left(\mathbb{R}^{d}\right)  \right\}  .
\]
We would like to show that $\mathcal{H}_{\kappa_{0}}$ is $\widehat{V}
$-invariant space. Let $g\in\mathcal{H}_{\kappa_{0}}$ such that $g\left(
\lambda\right)  =u_{\lambda}\otimes v_{\lambda}^{\kappa_{0}}.$ First, it is
easy to see that
\[
\widehat{V}\left(  x\right)  g\left(  \lambda\right)  =\pi_{\lambda}\left(
x\right)  u_{\lambda}\otimes v_{\lambda}^{\kappa_{0}}\in L^{2}\left(\mathbb{R}^{d}\right)  \otimes v_{\lambda}^{\kappa_{0}}.
\]
Next, let $\delta$ be defined such that  $d(\alpha^{-m}x)=\delta(m) dx$. For ease of notation write $\widehat V(m) = \widehat V(\a^m), \a^m \in H$. Then
\begin{align*}
\widehat{V}\left(  m\right)  g\left(  \lambda\right)    & =C\left(  \alpha
^{m}\right)  \circ g\left(  A^{-m}\lambda\right)  \circ C\left(  \alpha
^{m}\right)  ^{-1}\delta\left(  m\right)  ^{1/2}\\
& =C\left(  \alpha^{m}\right)  u_{A^{-m}\lambda}\otimes C\left(  \alpha
^{m}\right)  v_{A^{-m}\lambda}^{\kappa_{0}}\delta\left(  m\right)  ^{1/2}\\
& =\delta\left(  m\right)  ^{1/2}C\left(  \alpha^{m}\right)  u_{A^{-m}\lambda
}\otimes C\left(  \alpha^{m}\right)  v_{A^{-m}\lambda}^{\kappa_{0}}.
\end{align*}
Since
\begin{align*}
v_{A^{-m}\lambda}^{\kappa_{0}}  & =\sum_{j\in\mathbb{Z}}C\left(  \alpha^{-j}\right)  b_{\kappa_{0}}1_{A^{-j}E}\left(  A^{-m}
\lambda\right)  \\
& =\sum_{j\in\mathbb{Z}}C\left(  \alpha^{-j}\right)  b_{\kappa_{0}}1_{A^{m-j}E}\left(  \lambda
\right)  \\
& =\sum_{s\in\mathbb{Z}}C\left(  \alpha^{-s-m}\right)  b_{\kappa_{0}}1_{A^{-s}E}\left(
\lambda\right)  \\
& =\sum_{s\in\mathbb{Z}}C\left(  \alpha^{-m}\right)  C\left(  \alpha^{-s}\right)  b_{\kappa_{0}
}1_{A^{-s}E}\left(  \lambda\right),
\end{align*}
then
\begin{align*}
\widehat{V}\left(  m\right)  g\left(  \lambda\right)    & =\delta\left(
m\right)  ^{1/2}C\left(  \alpha^{m}\right)  u_{A^{-m}\lambda}\otimes\left(\sum_{s\in%
\mathbb{Z}
}C\left(  \alpha^{m}\right)  C\left(  \alpha^{-m}\right)  C\left(  \alpha
^{-s}\right)  b_{\kappa_{0}}1_{A^{-s}E}\left(  \lambda\right)\right)  \\
& =\delta\left(  m\right)  ^{1/2}C\left(  \alpha^{m}\right)  u_{A^{-m}\lambda
}\otimes\left(\sum_{s\in\mathbb{Z}}C\left(  \alpha^{-s}\right)  b_{\kappa_{0}}1_{A^{-s}E}\left(  \lambda\right)\right)
\\
& =\delta\left(  m\right)  ^{1/2}C\left(  \alpha^{m}\right)  u_{A^{-m}\lambda
}\otimes v\left(  \lambda\right).
\end{align*}
Thus, $\widehat{V}\left(  m\right)  g\left(  \lambda\right)  \in L^{2}\left(\mathbb{R}^{d}\right)  \otimes v_{\lambda}^{\kappa_{0}}.$ This shows that indeed, $\mathcal{H}_{\kappa_{0}}$ is $\widehat{V}
$-invariant. Now, we define the unitary
map
\[
\Phi=\Phi_{\kappa_{0}}:\mathcal{H}_{\kappa_{0}}\rightarrow L^{2}\left(
E\times%
\mathbb{Z}
,L^{2}\left(
\mathbb{R}
^{d}\right)  ,d\lambda\right)  \simeq\int_{E}^{\oplus}l^{2}\left(
\mathbb{Z}
,L^{2}\left(
\mathbb{R}
^{d}\right)  \right)  d\lambda
\]
such that for $g\in\mathcal{H}_{\kappa_{0}},$ we write $g\left(
\lambda\right)  =u_{\lambda}^{g}\otimes v_{\lambda}^{\kappa_{0}}$ and
\[
\Phi g\left(  \lambda\right)  =\left(  C\left(  \alpha\right)  ^{-j}%
u_{A^{j}\lambda}^{g}\left\vert \det A\right\vert ^{j/2}\right)  _{j\in\mathbb{Z}
}.
\]
With some straightforward computations, we obtain
\[
\Phi\widehat{V}\left(  x\right)  g\left(  \lambda\right)  =\left(  C\left(
\alpha\right)  ^{-j}\pi_{A^{j}\lambda}\left(  x\right)  u_{A^{j}\lambda}%
^{g}\left\vert \det A\right\vert ^{j/2}\right)  _{j\in\mathbb{Z}}%
\]
and
\[
\Phi\widehat{V}\left(  m\right)  g\left(  \lambda\right)  =\left(  C\left(  \alpha\right)  ^{m-j}u_{A^{j-m}\lambda}%
^{g}\left\vert \det A\right\vert ^{\frac{j-m}{2}}\right)  _{j\in\mathbb{Z}}.
\]
Let $\rho_{\lambda}\simeq\mathrm{Ind}_{N}^{N\rtimes H}\left(  \pi_{\lambda
}\right)  $ be realized as acting in $l^{2}\left(\mathbb{Z},L^{2}\left(\mathbb{R}^{d}\right)  \right)  .$ Then,
\begin{align*}
\rho_{\lambda}\left(  x\right)  \Phi g\left(  \lambda\right)    & =\left(
\pi_{\lambda}\left(  \alpha^{j}x\right)  C\left(  \alpha\right)  ^{-j}%
u_{A^{j}\lambda}^{g}\left\vert \det A\right\vert ^{\frac{j}{2}}\right)  _{j\in%
\mathbb{Z}
}\\
& =\left(  C\left(  \alpha\right)  ^{-j}\pi_{A^{j}\lambda}\left(  x\right)
u_{A^{j}\lambda}^{g}\left\vert \det A\right\vert ^{\frac{j}{2}}\right)  _{j\in%
\mathbb{Z}
}\\
& =\Phi\widehat{V}\left(  x\right)  g\left(  \lambda\right)  .
\end{align*}
Similarly,  it is easy to see that
\begin{align*}
\rho_{\lambda}\left(  m\right)  \Phi g\left(  \lambda\right)    =\Phi\widehat{V}\left(  m\right)  g\left(  \lambda\right)  .
\end{align*}
Thus, the restriction of $\widehat{V}$ to the Hilbert space $\mathcal{H}%
_{\kappa_{0}}$ is equivalent to
\[
\int_{E}^{\oplus}\mathrm{Ind}_{N}^{N\rtimes H}\left(  \pi_{\lambda}\right)
d\lambda.
\]
Finally, we obtain
\[
V\simeq\widehat{V}\simeq\int_{E}^{\oplus}\oplus_{\kappa\in\mathbb{I}%
}\mathrm{Ind}_{N}^{N\rtimes H}\left(  \pi_{\lambda}\right)  d\lambda.
\]
This concludes the proof. 
\end{proof}

\begin{lemma}
If the action of $H$ is trivial on $\Lambda,$ then $\left\vert \det
A\right\vert =1.$
\end{lemma}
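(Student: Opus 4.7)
The plan is to combine the unitarity of the dilation operator $D$ on $L^{2}(N)$ with the Plancherel decomposition. Under the hypothesis that $H$ acts trivially on $\Lambda$, the operator $D$ will act on each Plancherel fiber as a unitary scaled by the constant $|\det A|^{-1/2}$, and unitarity of $D$ will then force $|\det A|=1$.

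First I would unwind the hypothesis. The statement that $H$ acts trivially on $\Lambda$ is exactly $A^{-1}\lambda=\lambda$ for every $\lambda\in\Lambda$; since $A$ is diagonal with respect to the strong Malcev basis, this amounts to $a_{k}=1$ for every $k\notin\mathbf{e}$. Next I would compute $\widehat{Df}(\lambda)$ directly. Using $Df(x)=|\det A|^{1/2}f(\alpha x)$ and the change of variable $y=\alpha x$ supplied by the identity $\int_{N}f(\alpha x)|\det A|\,d\mu(x)=\int_{N}f(x)\,d\mu(x)$ recorded in the preamble, one obtains
\[
\widehat{Df}(\lambda)\;=\;|\det A|^{-1/2}\,(\pi_{\lambda}\circ\alpha^{-1})(f).
\]
The representation $\pi_{\lambda}\circ\alpha^{-1}$ is unitarily equivalent to $\pi_{A^{-1}\lambda}$: indeed, the operator $C(\alpha^{-1})$ introduced in the preamble, together with the intertwining identity $C(\alpha)\pi_{\lambda}=\pi_{A\lambda}C(\alpha)$, supplies an explicit unitary $\mathcal{H}_{A^{-1}\lambda}\to\mathcal{H}_{\lambda}$ realising this equivalence. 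Since a unitary equivalence preserves Hilbert--Schmidt norms, and since the trivial action hypothesis gives $A^{-1}\lambda=\lambda$,
\[
\|\widehat{Df}(\lambda)\|_{HS}^{2}\;=\;|\det A|^{-1}\,\|\widehat{f}(A^{-1}\lambda)\|_{HS}^{2}\;=\;|\det A|^{-1}\,\|\widehat{f}(\lambda)\|_{HS}^{2}.
\]

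Integrating against the Plancherel measure $|\mathbf{P}(\lambda)|\,d\lambda$ and applying the Plancherel formula on both sides yields
\[
\|Df\|^{2}\;=\;\int_{\Lambda}\|\widehat{Df}(\lambda)\|_{HS}^{2}\,|\mathbf{P}(\lambda)|\,d\lambda\;=\;|\det A|^{-1}\,\|f\|^{2},
\]
which combined with the unitarity of $D$ forces $|\det A|=1$. The only delicate point is confirming that the equivalence $\pi_{\lambda}\circ\alpha^{-1}\simeq\pi_{A^{-1}\lambda}$ is implemented by a \emph{unitary} intertwiner, so that the displayed Hilbert--Schmidt identity is legitimate rather than merely up to a bounded equivalence. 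This is exactly what the operator $C(\alpha^{-1})$ from the preamble provides, so no substantial obstacle arises.
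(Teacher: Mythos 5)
Your proof is correct, and it takes a genuinely different route from the paper's. You argue analytically and globally: you compute $\widehat{Df}(\lambda)=|\det A|^{-1/2}\,\widehat{f}(\pi_{\lambda}\circ\alpha^{-1})$ via the change of variables, invoke the unitary equivalence $\pi_{\lambda}\circ\alpha^{-1}\simeq\pi_{A^{-1}\lambda}$, use the hypothesis in the form $A\lambda=\lambda$ on $\Lambda$ (equivalent, as you note, to $a_{k}=1$ for $k\notin\mathbf{e}$), and then play the Plancherel formula against the unitarity of $D$ to force $|\det A|=1$. The paper instead argues locally and algebraically: for a fixed $\lambda\in\Lambda$ it equips $\mathfrak{v}=\mathbb{R}\text{-span}\{X_{i}:i\in\mathbf{e}\}$ with the non-degenerate form $\omega(X,Y)=\lambda([X,Y])$ (non-degeneracy from Lemma 27 of \cite{Ous}); since triviality of the action gives $A\lambda=\lambda$, one has $\omega(AX,AY)=(A\lambda)([X,Y])=\omega(X,Y)$, so $A|_{\mathfrak{v}}$ is symplectic and $|\det A|_{\mathfrak{v}}|=1$, while on the complementary span of $\{X_{k}:k\notin\mathbf{e}\}$ all eigenvalues equal $1$ by definition of trivial action; multiplying the two determinants gives $|\det A|=1$. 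The paper's argument is shorter and essentially pure linear algebra, requiring none of the harmonic-analytic machinery; yours leans on the full Plancherel theorem and the $H$-equivariance of the Kirillov parametrization, but both are already set up in the paper, so nothing extra is needed, and your route has the conceptual payoff of showing \emph{why} the lemma holds: when $A$ fixes $\Lambda$ pointwise there is no change of variables on $\Lambda$ available to absorb the Jacobian factor $|\det A|^{-1}$, which is precisely the obstruction to a dilation tiling in this case. One point worth making explicit in your write-up: the paper's displayed identity $C(\alpha^{m})\pi_{\lambda}(\cdot)=\pi_{A^{m}\lambda}(\cdot)C(\alpha^{m})$, read literally, suppresses a precomposition by $\alpha^{m}$; the precise relation is $C(\alpha)\,(\pi_{\lambda}\circ\alpha)(x)=\pi_{A\lambda}(x)\,C(\alpha)$, which, applied at $A^{-1}\lambda$, is exactly the unitary equivalence $\pi_{\lambda}\circ\alpha^{-1}\simeq\pi_{A^{-1}\lambda}$ your Hilbert--Schmidt step needs — so the delicate point you flagged does go through, but state the intertwining relation in this corrected form.
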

\begin{proof}
Let $\mathfrak{v}=\mathbb{R}
$-span $\left\{  X_{i}:i\in\mathbf{e}\right\}$, and let $\lambda \in \Lambda.$ We endow $\mathfrak{v}$
with the bi-linear form $\omega$ defined by
$\omega\left(  X,Y\right)  =\lambda\left(  \left[  X,Y\right]  \right).$ This bi-linear form is non-degenerate, and the vector space $\mathfrak{v}$ together with the non-degenerate bilinear form $\omega$ has the structure of a symplectic vector space (see Lemma $27$ \cite{Ous}). Next, since $A$ is a diagonal matrix then $\mathfrak{v}$ is $A$-invariant and the
restriction of $A$ to $\mathfrak{v}$ is a symplectic transformation. Therefore
$\left\vert \det A|_{\mathfrak{v}}\right\vert =1$ and $
\left\vert \det A\right\vert =\left\vert \det A|_{\mathfrak{v}}\right\vert
\left\vert \det A|_{\mathfrak{n}\ominus\mathfrak{v}}\right\vert =1.$
\end{proof}

Let $E_{1}\subseteq\mathbb{R}^{d}$ such that $\left\{  A_{1}^{m}E_{1}:m\in\mathbb{Z}\right\}  $ is a measurable partition of $\mathbb{R}^{d}$ and $A_{1}$ is the restriction of $A$ to the vector space $\mathbb{R}$-$\mathrm{span} \left\{  X_{j_{1}},\cdots,X_{j_{d}}\right\} .$ Fix an orthonormal basis for $L^2(E_1).$ More precisely,  $ \left\{b_j: j\in\mathbb{J}\right\}$ is a fixed orthonormal basis for  $L^2(E_1)$ and the set $\mathbb{J}$ is a parametrizing set for this orthonormal basis. The set $\mathbb{J}$ will be important for the following proposition. 
\begin{proposition}
Assume that $N$ is not commutative and that $H$ acts trivially on $\Lambda.$ Then
\[
V\simeq\int_{\Lambda}^{\oplus}\int_{\mathbb{T}}^{\oplus}\oplus_{\kappa
\in\mathbb{J}}\widetilde{\pi}_{\lambda,\sigma}\text{ }d\sigma d\lambda
\]
where $\widetilde{\pi}_{\lambda,\sigma}(x)=\pi_{\lambda}(x)$ for $x\in\Gamma_{\alpha}$  and $\widetilde{\pi}_{\lambda,\sigma}(\alpha)=C(\alpha)\chi_{\sigma}(\alpha)$ and $\chi_{\sigma}$ is a character of $H.$
\end{proposition}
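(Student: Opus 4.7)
My plan is to follow the strategy of the previous proposition, taking advantage of two simplifications specific to this setting: $H$ fixes every $\lambda\in\Lambda$ pointwise, so the integral over $\Lambda$ stays intact and no tiling of $\Lambda$ is needed, and the preceding lemma guarantees $|\det A|=1$, so all scaling factors disappear. After applying $\mathcal P_1$, the equivalent representation $\widehat V=\mathcal P_1 V\mathcal P_1^{-1}$ acts on $\int_\Lambda^\oplus L^2(\R^d)\otimes L^2(\R^d)\,d\lambda$; on each fiber $x\in N$ acts as $\pi_\lambda(x)\otimes I$ while, by the formula for $\widehat V(m)$ derived in the previous proof specialised to $A^{-m}\lambda=\lambda$ and $\delta(m)=1$, $\alpha$ acts as the conjugation $T\mapsto C(\alpha)TC(\alpha)^{-1}$, which under the tensor identification is $C(\alpha)\otimes\overline{C(\alpha)}$.

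The heart of the argument is to spectrally decompose the single unitary $\overline{C(\alpha)}$ on the second factor $L^2(\R^d)$, an operator that does not depend on $\lambda$. Since $\{A_1^m E_1:m\in\Z\}$ is a measurable partition of $\R^d$, the family $\{C(\alpha)^{-j}b_\kappa:j\in\Z,\ \kappa\in\mathbb J\}$ is an orthonormal basis of $L^2(\R^d)$, and for each $\kappa\in\mathbb J$ the closed span $\mathcal V_\kappa$ of $\{C(\alpha)^{-j}b_\kappa:j\in\Z\}$ is cyclic for $\langle\alpha\rangle$. The identification $C(\alpha)^{-j}b_\kappa\leftrightarrow\delta_j$ produces a unitary $\mathcal V_\kappa\simeq\ell^2(\Z)$ intertwining $C(\alpha)$ with a shift, and composing with the Fourier transform $\ell^2(\Z)\to L^2(\mathbb T)$ turns that shift into multiplication by a character $\chi_\sigma(\alpha)$. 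Summing over $\kappa$ gives $L^2(\R^d)\simeq \int_{\mathbb T}^\oplus \ell^2(\mathbb J)\,d\sigma$ with $\overline{C(\alpha)}$ mapped to multiplication by $\overline{\chi_\sigma(\alpha)}$.

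Inserting this decomposition into the second tensor factor (and relabelling $\sigma$ if needed so that the scalar that appears agrees with $\chi_\sigma(\alpha)$), the fiber at $(\lambda,\sigma)\in\Lambda\times\mathbb T$ becomes $\bigoplus_{\kappa\in\mathbb J}L^2(\R^d)$, on which $x\in\Gamma_\alpha$ acts diagonally as $\pi_\lambda(x)$ and $\alpha$ acts diagonally as $\chi_\sigma(\alpha)\,C(\alpha)$. This is precisely $\bigoplus_\kappa\widetilde\pi_{\lambda,\sigma}$. A short verification, using the intertwining identity from the preliminaries together with $A\lambda=\lambda$, confirms that $\widetilde\pi_{\lambda,\sigma}$ respects the semidirect-product relation of $F$: the scalar character cancels in the conjugation, leaving
\[
\widetilde\pi_{\lambda,\sigma}(\alpha)\widetilde\pi_{\lambda,\sigma}(x)\widetilde\pi_{\lambda,\sigma}(\alpha)^{-1}=C(\alpha)\pi_\lambda(x)C(\alpha)^{-1}=\pi_{A\lambda}(\alpha^{-1}(x))=\pi_\lambda(\alpha^{-1}(x)),
\]
which matches $\widetilde\pi_{\lambda,\sigma}(\alpha^{-1}(x))$ as required. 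The main obstacle I anticipate is measurability: one must argue that the pointwise Fourier decomposition of the second factor assembles into a genuine direct integral over $\Lambda\times\mathbb T$. Because $\overline{C(\alpha)}$ is independent of $\lambda$, the measurable field of unitaries $\mathcal V_\kappa\to L^2(\mathbb T)$ can be chosen uniformly in $\lambda$, which legitimizes the Fubini-type rearrangement of the two direct integrals.
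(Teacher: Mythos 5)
Your proposal is correct and follows essentially the same route as the paper: after conjugating by $\mathcal{P}_1$, your spectral decomposition of $C(\alpha)$ on the second Hilbert--Schmidt factor via the orthonormal basis $\{C(\alpha)^{-j}b_\kappa : j\in\Z,\ \kappa\in\mathbb{J}\}$ and the Fourier transform $\ell^2(\Z)\to L^2(\mathbb{T})$ is precisely the paper's composition $\mathcal{Z}\circ P_2\circ P_1$ (dilation coordinates over $E_1$, Fourier series in the $\Z$-variable, expansion in the basis of $L^2(E_1)$), yielding the same fibers $\oplus_{\kappa\in\mathbb{J}}\widetilde{\pi}_{\lambda,\sigma}$. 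Your explicit check of the covariance relation $C(\alpha)\pi_\lambda(x)C(\alpha)^{-1}=\pi_{A\lambda}(\alpha^{-1}(x))=\pi_\lambda(\alpha^{-1}(x))$ and your remark on choosing the intertwiners uniformly in $\lambda$ are correct details the paper leaves implicit, not deviations from its method.
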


\begin{proof}  We aim to construct an intertwining unitary operator for the representations described above. We recall that the representation $V$ is equivalent to $\widehat{V}$ which is
acting in $
\int_{\Lambda}^{\oplus}L^{2}\left(\mathbb{R}^{d}\right)  \otimes L^{2}\left(\mathbb{R}^{d}\right)  \text{ }d\lambda.$ To simplify our proof, we will only consider rank-one operators; since their linear span is dense in the Hilbert space of Hilbert-Schmidt  operators. We recall that $\widehat
{V}=\mathcal{P}_{1}\circ V\circ\mathcal{P}_{1}^{-1}$ and we write $\widehat
{V}=\int_{\Lambda}^{\oplus}\widehat{V}_{\lambda}\text{ }d\lambda$ such that
$$\widehat{V}_{\lambda}\left(  x\right)  g\left(  \lambda\right)  =\pi
_{\lambda}\left(  x\right)  u_{\lambda}^{g}\otimes v_{\lambda}$$
for $g\left(  \lambda\right)  =u_{\lambda}^{g}\otimes v_{\lambda}.$ It is also
fairly easy to see that $\widehat{V}_{\lambda}\left(  \alpha^{m}\right)
g\left(  \lambda\right)  =C\left(  \alpha^{m}\right)  \circ g\left(
\lambda\right)  \circ C\left(  \alpha^{m}\right)  ^{-1}.$  We define a unitary map $P_{1}:L^{2}\left(\mathbb{R}^{d}\right)  \rightarrow\int_{E_{1}}^{\oplus}l^{2}\left(\mathbb{Z}\right)  $ $dt$ where
\[
P_{1}f\left(  t,m\right)  =\left(  f\left(  A_{1}^{m}t\right)  \left\vert \det
A_{1}\right\vert ^{m/2}\right)  _{m\in\mathbb{Z}}.
\]
We also define a unitary map: $P_{2}:\int_{E_{1}}^{\oplus}l^{2}\left(\mathbb{Z}\right)  $ $dt\rightarrow\int_{E_{1}}^{\oplus}\int_{\mathbb{T}}^{\oplus}\mathbb{C}
$ $d\sigma dt$ such that
\[
P_{2}\left(  \left\{  \left(  a_{k}^{t}\right)  _{k\in\mathbb{Z}}\right\}  _{t\in E_{1}}\right)  \left(  s,\sigma\right)  =\sum_{k\in\mathbb{Z}}a_{k}^{s}e^{2\pi ik\sigma}\text{ (\textit{Fourier transform} of }\left(  a_{k}^{s}\right)  _{k\in\mathbb{Z}}.)
\]
Now, we define the unitary map
\[
Q:L^{2}\left(  \Lambda,\mathcal{HS}\left(  L^{2}\left(\mathbb{R}^{d}\right)  \right)  ,d\lambda\right)  \xrightarrow{\hspace*{1cm}}\int_{\Lambda}^{\oplus}
L^{2}\left(\mathbb{R}^{d}\right)  \otimes\left(\int_{E_{1}}^{\oplus}\int_{\mathbb{T}}^{\oplus}\mathbb{C}\text{ }d\sigma dt\right)d\lambda
\]
as follows:
\[
Q\left(  \left\{  u_{\lambda}\otimes v_{\lambda}\right\}  _{\lambda\in\Lambda
}\right)  =\left(  \left\{  u_{\lambda}\otimes P_{2}P_{1}\left(  v_{\lambda
}\right)  \right\}  _{\lambda\in\Lambda}\right).
\]
We write $Q=\int_{\Lambda}^{\oplus}Q_{\lambda}$ $d\lambda$ such that
\begin{align*}
Q_{\lambda}\widehat{V}_{\lambda}\left(  x\right)  \left(  u_{\lambda}\otimes
v_{\lambda}\right)    & =Q_{\lambda}\left(  \left(  \pi_{\lambda}\left(
x\right)  u_{\lambda}\right)  \otimes v_{\lambda}\right)  \\
& =\pi_{\lambda}\left(  x\right)  Q_{\lambda}\left(  u_{\lambda}\otimes
v_{\lambda}\right) 
\end{align*}
and
\begin{align*}
Q_{\lambda}\widehat{V}_{\lambda}\left(  m\right)  \left(  u_{\lambda}\otimes
v_{\lambda}\right)    & =Q_{\lambda}\left(  C\left(  \alpha^{m}\right)
u_{\lambda}\otimes C\left(  \alpha^{m}\right)  v_{\lambda}\right)  \\
& =C\left(  \alpha^{m}\right)  u_{\lambda}\otimes P_{2}P_{1}C\left(
\alpha^{m}\right)  v_{\lambda}\\
& =C\left(  \alpha^{m}\right)  u_{\lambda}\otimes\chi_{\sigma}\left(
m\right)  P_{2}P_{1}v_{\lambda}.
\end{align*}
The last equality above is justified by the following computations:
\begin{align*}
P_{2}P_{1}C\left(  \alpha^{m}\right)  v_{\lambda}\left(  t,\sigma\right)    &
=\sum_{k\in\mathbb{Z}}\left(  P_{1}C\left(  \alpha^{m}\right)  v_{\lambda}\right)  _{k}^{t}e^{2\pi
ik\sigma}\\
& =\sum_{k\in\mathbb{Z}}v_{\lambda}\left(  A_{1}^{k-m}t\right)  \left\vert \det A_{1}\right\vert
^{\frac{k-m}{2}}e^{2\pi ik\sigma}\\
& =\sum_{l\in\mathbb{Z}}v_{\lambda}\left(  A_{1}^{l}t\right)  \left\vert \det A_{1}\right\vert
^{\frac{l}{2}}e^{2\pi i\left(  l+m\right)  \sigma}\\
& =e^{2\pi im\sigma}\sum_{l\in\mathbb{Z}}v_{\lambda}\left(  A_{1}^{l}t\right)  \left\vert \det A_{1}\right\vert
^{\frac{l}{2}}e^{2\pi il\sigma}\\
& =e^{2\pi im\sigma}P_{2}P_{1}v_{\lambda}\left(  t,\sigma\right)  .
\end{align*}
Let $\left\{  b_{k}:k\in\mathbb{J}\right\}  $ be an orthonormal basis for
$L^{2}\left(  E_{1}\right)  .$ Define the unitary map
\[
\mathcal{Z}:\int_{E_{1}}^{\oplus}\int_{\mathbb{T}}^{\oplus}\text{ }\mathbb{C}\text{ }d\sigma dt\rightarrow\int_{\mathbb{T}}^{\oplus}\oplus_{k\in\mathbb{J}}\mathbb{C}\text{ }d\sigma
\]
such that
\[
\mathcal{Z}f\left(  \sigma\right)  =\left(  \left\langle f\left(  \cdot,\sigma
\right)  ,b_{k}\right\rangle \right)  _{k\in\mathbb{J}}.
\]
Via the map $\mathcal{Z},$ we identify the Hilbert space
\[
\int_{\Lambda}^{\oplus}L^{2}\left(\mathbb{R}^{d}\right)  \otimes\left(  \int_{E_{1}}^{\oplus}\int_{\mathbb{T}}^{\oplus
}\text{ }\mathbb{C}\text{ }d\sigma dt\right)  \text{ }d\lambda
\]
with
\[
\int_{\Lambda}^{\oplus}L^{2}\left(\mathbb{R}^{d}\right)  \otimes\left(  \int_{\mathbb{T}}^{\oplus}\oplus_{k\in\mathbb{J}}\mathbb{C}\text{ }d\sigma\right)  \text{ }d\lambda
\]
which is then identified with
\[
\int_{\Lambda}^{\oplus}\int_{\mathbb{T}}^{\oplus}L^{2}\left(\mathbb{R}^{d}\right)  \otimes\left(  \oplus_{k\in\mathbb{J}}\mathbb{C}\right)  \text{ }d\sigma d\lambda.
\]
Via the identifications described above, it follows immediately that the quasi-regular representation $V$ is unitarily equivalent to the representation
\[
\int_{\Lambda}^{\oplus}\int_{\mathbb{T}}^{\oplus}\oplus_{k\in\mathbb{J}}\widetilde{\pi}_{\lambda,\sigma
}\text{ }d\sigma d\lambda.
\]
This concludes the proof.
\end{proof}
\begin{proposition}
If $N=\mathbb{R}^{n}$ is commutative and $H$ acts trivially on $\Lambda$ then
\[
V\simeq\int_{\Lambda}^{\oplus}\widetilde{\pi}_{\lambda}\text{ }d\lambda
\]
where $\widetilde{\pi}_{\lambda}\left(  x,\alpha^{0}\right)  =e^{2\pi
i\left\langle x,\lambda\right\rangle }.$
\end{proposition}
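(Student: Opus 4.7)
The plan is to observe that in the commutative setting the triviality hypothesis already forces $A$ to be the identity, after which the proposition reduces to the classical Plancherel decomposition of $L^{2}(\mathbb{R}^{n})$ under translations.

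First I will combine the remark that for abelian $N$ one has $\mathbf{e}=\emptyset$ and $d=0$ (so $\Lambda$ is a conull subset of $\widehat{\mathbb{R}^{n}}\simeq\mathbb{R}^{n}$) with the characterization stated just before the previous proposition, namely that $H$ acts trivially on $\Lambda$ if and only if $a_{k}=1$ for all $k\notin\mathbf{e}$. Because $\mathbf{e}$ is empty in the commutative case, this forces $a_{k}=1$ for every $k$, and since $A$ is diagonal in the fixed strong Malcev basis one concludes $A=I$. Consequently $\alpha$ is the identity automorphism of $N$.

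With $A=I$ in hand, the dilation operator $Df(x)=|\det A|^{1/2}f(\alpha x)=f(x)$ is the identity on $L^{2}(\mathbb{R}^{n})$, so $V(x,\alpha^{m})=T_{x}D^{m}=T_{x}$ for every $m\in\mathbb{Z}$; thus $V$ factors through the projection $F\to N$ onto the first coordinate. The Plancherel isomorphism $\mathcal{P}$ of Section 2 then collapses, in this commutative case, to the classical Fourier transform $L^{2}(\mathbb{R}^{n})\simeq\int^{\oplus}_{\Lambda}\mathbb{C}\,d\lambda$ (since $d=0$ makes $\mathcal{HS}(\mathbb{C})=\mathbb{C}$ and the Plancherel density reduces to the constant function $1$), which diagonalizes each $T_{x}$ as multiplication by $e^{2\pi i\langle x,\lambda\rangle}$ on the fiber over $\lambda$. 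Defining $\widetilde{\pi}_{\lambda}$ as the one-dimensional representation of $F$ given by $\widetilde{\pi}_{\lambda}(x,\alpha^{m})=e^{2\pi i\langle x,\lambda\rangle}$ (independent of $m$, since the $H$-factor acts trivially) yields the desired $V\simeq\int^{\oplus}_{\Lambda}\widetilde{\pi}_{\lambda}\,d\lambda$, and specializing $m=0$ reproduces the formula stated in the proposition.

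I do not anticipate any real obstacle here: the argument is essentially bookkeeping from two facts already established earlier in the paper. The one observation worth isolating, and the only place where one must be alert, is that in the commutative case the hypothesis on $\Lambda$ already pins down $A=I$; once this is noted, everything else follows mechanically from Plancherel on $\mathbb{R}^{n}$.
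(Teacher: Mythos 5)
Your proposal is correct and follows essentially the same route as the paper's own proof: deduce $A=I$ from the triviality hypothesis (the paper asserts the eigenvalues are all $1$; you fill in the short derivation via $\mathbf{e}=\emptyset$ and the stated criterion $a_k=1$ for $k\notin\mathbf{e}$), then apply the classical Plancherel/Fourier transform to diagonalize the translations, with the $H$-factor acting trivially. The extra detail you supply is consistent with, not different from, the paper's argument.
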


\begin{proof}
Since $N$ is commutative, then all eigenvalues of the matrix $A$ are equal to
one if $H$ acts trivially on $\Lambda=\widehat{\mathbb{R}^{n}}$. Via the Plancherel transform, the representation $T$ of $N$ is unitarly
equivalent to $\int_{\widehat{\mathbb{R}}^{n}}^{\oplus}\chi_{\lambda}$ $d\lambda$. Since the representation $D$ of $H$ acts
trivially on $\Lambda$, then it follows that $
V\simeq\int_{\Lambda}^{\oplus}\widetilde{\pi}_{\lambda}\text{ }d\lambda.$
\end{proof}
 \

 The following lemma shows that a decomposition of $V$ yields immediately a decomposition of $W$. 

\

\begin{lemma} Let $N$ be a locally compact group, $H$ a group of automorphisms of $N$, and let $N_0$ be a subgroup of $N$ that is normalized by $H$. Let $\pi$ be a unitary representation of $N$. Then 
$$
\mathrm{Ind}_{N}^{N \rtimes H} (\pi) |_{N_0} \simeq \mathrm{Ind}_{N_0}^{N_0\rtimes H}(\pi |_{N_0}).
$$

\end{lemma}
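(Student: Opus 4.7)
My plan is to realize both sides concretely on the common Hilbert space $L^{2}(H,\mathcal H_{\pi})$ via the canonical sections of the quotient maps, and verify that the actions of $N_{0}\rtimes H$ agree termwise. The key structural observation is that because $N_{0}$ is $H$-invariant, the quotients $(N\rtimes H)/N$ and $(N_{0}\rtimes H)/N_{0}$ are both canonically identified with $H$ by the section $h\mapsto (e,h)$. Thus both induced representations live naturally on the same space of $\mathcal H_{\pi}$-valued square integrable functions on $H$.

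For $\mathrm{Ind}_{N}^{N\rtimes H}(\pi)$, I would write any covariant $f$ in the induced space in terms of $F(h)=f(e,h)$. Using the semi-direct product law of the paper, one checks $(y,h)=(e,h)\cdot(\phi_{h}^{-1}(y),e)$, where $\phi_{h}$ denotes the action of $h\in H$ on $N$, so the covariance condition yields
\[
f(y,h)=\pi(\phi_{h}^{-1}(y))^{-1}\,F(h).
\]
A direct computation using $(y_{0},h_{0})^{-1}(e,h)=(\phi_{h_{0}^{-1}}(y_{0}^{-1}),h_{0}^{-1}h)$ and the cocycle identity $\phi_{h_{0}^{-1}h}^{-1}\circ\phi_{h_{0}^{-1}}=\phi_{h^{-1}}$ then produces the action formula
\[
\bigl(\mathrm{Ind}_{N}^{N\rtimes H}(\pi)(y_{0},h_{0})\,F\bigr)(h)\;=\;\pi\bigl(\phi_{h^{-1}}(y_{0})\bigr)\,F(h_{0}^{-1}h).
\]

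Performing the identical reduction for $\mathrm{Ind}_{N_{0}}^{N_{0}\rtimes H}(\pi|_{N_{0}})$ gives exactly the same formula with $\pi$ replaced by $\pi|_{N_{0}}$. The crucial point is that when $(y_{0},h_{0})\in N_{0}\rtimes H$, the $H$-invariance of $N_{0}$ forces $\phi_{h^{-1}}(y_{0})\in N_{0}$ for every $h\in H$; therefore $\pi(\phi_{h^{-1}}(y_{0}))=\pi|_{N_{0}}(\phi_{h^{-1}}(y_{0}))$, and the two action formulas coincide on the nose. The identity map on $L^{2}(H,\mathcal H_{\pi})$ is then the desired intertwining unitary.

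The main care point is matching the underlying measures: both realizations pull back a Haar measure on $H$ through their respective canonical sections, and the two modular functions enter identically since $H$ acts by automorphisms of both $N$ and $N_{0}$. In the nilpotent setting of the paper this is automatic by unimodularity, so no cocycle correction is required. As a conceptual alternative, one may obtain the same conclusion from Mackey's subgroup theorem: since $N$ is normal in $N\rtimes H$ and $N\cdot (N_{0}\rtimes H)=N\rtimes H$, the double coset space $(N_{0}\rtimes H)\backslash(N\rtimes H)/N$ is a single point, and Mackey's restriction formula collapses to induction of $\pi|_{N\cap (N_{0}\rtimes H)}=\pi|_{N_{0}}$ from $N_{0}$ up to $N_{0}\rtimes H$, giving the stated equivalence.
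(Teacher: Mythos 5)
Your main argument is correct and takes essentially the same route as the paper: both proofs realize the two induced representations on functions determined by their values on the common transversal $H$ (the paper via the unitary restriction map $g \mapsto g|_{N_0\rtimes H}$, you via the explicit action formula on $L^{2}(H,\mathcal H_{\pi})$) and then verify that the $N_0\rtimes H$-actions coincide precisely because $H$ normalizes $N_0$. One caveat on your closing aside: the Mackey subgroup theorem requires closed subgroups and regularity hypotheses that fail in the paper's intended application, where $N_0=\Gamma_\alpha$ is typically non-closed (often dense) in $N$, so that alternative should be dropped or restricted — though your direct computation does not depend on it.
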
 

\begin{proof} Put $\tau = \text{Ind}_{N}^{N \rtimes H} (\pi) |_{N}$, and $\tau_0 = \text{Ind}_{N}^{N \rtimes H} (\pi) |_{N_0}$ which are acting in the Hilbert spaces $\mathcal H_\tau$ and $\mathcal H_{\tau_0}$ respectively. Since elements of $\mathcal H_\tau$ and $\mathcal H_{\tau_0}$ are both determined by their values on $\H$ , then the restriction map $g \mapsto g|_{N_0\rtimes H}$ is an isomorphism of $\mathcal H_\tau$ with $\mathcal H_{\tau_0}$. Given $g \in \mathcal H_\tau$, then for $x \in N_0, h \in H$ since $N_0$ is normalized by $H$,  $
\tau(xh) g(k) = g(h^{-1}x^{-1} k) \delta(h)^{-1/2} = g(h^{-1}k (k^{-1}x^{-1} k)) \delta(h)^{-1/2} = \pi(k^{-1}x k)g(h^{-1} k) = \tau_0(xh)g(k).$

\end{proof} 

\

The decomposition of the wavelet representation now follows from the preceding results.

\

\begin{thm}\label{main}

\

\noindent
1. Suppose that $H$ acts non-trivially on $\Lambda$, then

\[
W\simeq\int_{E}^{\oplus}\left(\oplus_{\kappa\in\mathbb{I}}\mathrm{Ind}%
_{\Gamma_{\alpha}}^{\Gamma_{\alpha}\rtimes H}\left(  \pi_{\lambda}%
|_{\Gamma_{\alpha}}\right)\right)  d\lambda.
\]

\

\noindent
2. If $H$ acts trivially on $\Lambda,$ and if $N$ is not commutative then the wavelet representation is
decomposed into a direct integral of representations as follows
\[
W\simeq\int_{\Lambda}^{\oplus}\int_{\mathbb{T}}^{\oplus}\left(\oplus_{\kappa
\in\mathbb{J}}\widetilde{\pi}_{\lambda,\sigma}|_{G}\right)d\sigma d\lambda.
\]

\

\noindent
3. If $H$ acts trivially on $\Lambda$ and if $N$ is commutative then \[
W\simeq\int_{\Lambda}^{\oplus}\widetilde{\pi}_{\lambda}|_{\Gamma\rtimes H} \text{ }d\lambda.
\]

\end{thm}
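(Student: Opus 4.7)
The plan is to combine the three preceding propositions, which decompose the representation $V$ of $F = N \rtimes H$, with the restriction lemma that immediately precedes the theorem, specialized to the subgroup $N_0 = \Gamma_\alpha$ of $N$. The hypotheses of that lemma apply because $\Gamma_\alpha = \langle \cup_{m \in \Z} \alpha^m(\Gamma) \rangle$ is by construction $H$-stable, so $H$ normalizes $\Gamma_\alpha$ inside $N$, and $W = V|_G$ with $G = \Gamma_\alpha \rtimes H$.

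For part 1, I would start from the decomposition
\[
V \simeq \int_E^\oplus \oplus_{\kappa \in \mathbb{I}} \mathrm{Ind}_N^{N\rtimes H}(\pi_\lambda) \, d\lambda
\]
given by the first proposition. Restriction to $G$ commutes with direct sums and with measurable direct integrals of representations on a constant Hilbert field, so it suffices to restrict each fiber. Applying the restriction lemma to each $\pi_\lambda$ with $N_0 = \Gamma_\alpha$ converts $\mathrm{Ind}_N^{N\rtimes H}(\pi_\lambda)|_{\Gamma_\alpha \rtimes H}$ into $\mathrm{Ind}_{\Gamma_\alpha}^{\Gamma_\alpha \rtimes H}(\pi_\lambda|_{\Gamma_\alpha})$, which yields exactly the asserted formula.

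For parts 2 and 3, the situation is simpler because the corresponding propositions already write $V$ as a direct integral of representations $\widetilde \pi_{\lambda,\sigma}$ (or $\widetilde\pi_\lambda$) of the group $F = N \rtimes H$ itself, rather than of an induced representation. In these cases the restriction lemma is not needed to rewrite the fibers; one only needs that restriction from $F$ to $G$ can be carried inside the direct integral (and inside the direct sum over $\kappa \in \mathbb{J}$ in part 2). The fibers on the right-hand side become $\widetilde \pi_{\lambda,\sigma}|_G$ and $\widetilde \pi_\lambda|_{\Gamma \rtimes H}$ respectively, matching the statements.

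The only delicate point — the step I would expect to be the main technical obstacle — is justifying that restriction commutes with the direct integral at the level of unitary equivalence, i.e., that the unitary intertwiners constructed in the three propositions still intertwine after restricting the group to $G$. This is essentially a measurability check: each of the intertwiners $\Phi_{\kappa_0}$, $Q$, and the Plancherel isomorphism was defined globally on the Hilbert space level and does not depend on the acting group, so the same operator that intertwines $V$ with the direct integral decomposition of $F$-representations also intertwines $W = V|_G$ with the direct integral of the corresponding restrictions to $G$. Once this observation is made, the three parts of the theorem follow immediately from the matching propositions (respectively, Propositions 3.2, 3.4 and 3.5 in the excerpt).
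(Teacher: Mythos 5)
Your proposal is correct and follows essentially the same route as the paper, which gives no separate argument for the theorem beyond stating that it ``follows from the preceding results'' --- precisely the combination you describe: the three decompositions of $V$ together with the restriction lemma applied with $N_0 = \Gamma_\alpha$ (which is $H$-invariant by construction) for part 1, and fiberwise restriction from $F$ to $G$ for parts 2 and 3. Your closing observation that the intertwining unitaries are defined purely at the Hilbert-space level, and hence still intertwine after the acting group is cut down to $G$, is the correct justification for the step the paper leaves tacit.
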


\vskip 1cm

We remark that the sets $\mathbb{I}$ and $\mathbb{J}$ are infinite sets if and only if $N$ is not commutative. 
Thus in both decompositions above, if $N$ is not commutative, the fiber representations are always decomposable into direct sums of equivalent representations. However, it is generally not true that the representations occurring in the direct sums are irreducible. In two examples below, we consider dilations on the three-dimensional Heisenberg group. For the first example, the representations occurring in the decomposition are direct sums of reducible representations. In the second example, the fiber representations occurring in the direct integral decompositions are direct sums of equivalent irreducible representations.  It then becomes obvious that sometimes the irreducibility of the representations occurring in the direct sums depends on some properties of the action of the automorphism $\alpha$ on $N.$ Surprisingly, we will see that there are also instances where the irreducibility of the representations occurring in the direct sums only depends on the Lie bracket structure of $\mathfrak{n}$. 

\begin{example}
\end{example} \noindent
Let $N$ be as in Example \ref{Heisenberg}: its Lie algebra is given by $\mathfrak{n}=\left(  X_{1},X_{2},X_{3}\right) _{\mathbb{R}}$ such that $\left[  X_{3},X_{2}\right]  =X_{1}$. Let $\alpha \in \mathrm{Aut}(N)$ be defined by $\alpha(\exp X_1)=\exp2X_1,\alpha(\exp X_2)=\exp 2X_2,\alpha(\exp X_3)=\exp X_3$.  As seen in Example \ref{Heisenberg}, we have 
$$
\L = \{ \l X_1^* : \l \ne 0\}
$$ 
which we identify with $\R \setminus \{0\}$, and for each $\l \in \L$, $A \l = 2\l$. If $$\Gamma =\exp\mathbb{Z}X_1 \exp\mathbb{Z}X_2 \exp\mathbb{Z}X_3$$ then 
$$
\Gamma_{\alpha}=\left\{  \exp\left(  2^{j}k_{1}X_{1}\right)  \exp\left(
2^{j}k_{2}X_{2}\right)  \exp\left(  k_{3}X_{3}\right)  :k_{1},k_{2},k_{3},j\in\mathbb{Z},j<0\right\}.$$

\

Let $E$ be a dilation tiling for $\L$ (for example, the Shannon set $(-1,-1/2] \cup [1/2,1)$.)  Then 
$$
W\simeq\int_{E }^{\oplus
}\oplus_{k\in\mathbb{I}}\text{ \textrm{Ind}}_{\Gamma_{\alpha}}^{\Gamma
_{\alpha}\rtimes H}\left(  \pi_{\lambda}|_{\Gamma_{\alpha}}\right)  \text{
}d\lambda.
$$
Moreover, the representation $\pi_{\lambda}|_{\Gamma_{\alpha}}$ acts on
$L^{2}\left(\mathbb{R}\right)  $ as follows:
\begin{align*}
\pi_{\lambda}\left(  \exp x_{1}X_{1}\right)  f\left(  t\right)    & =e^{2\pi
i\lambda x_{1}}f\left(  t\right)  \\
\pi_{\lambda}\left(  \exp x_{2}X_{2}\right)  f\left(  t\right)    & =e^{-2\pi
i\lambda x_{2}t}f\left(  t\right)  \\
\pi_{\lambda}\left(  \exp x_{3}X_{3}\right)  f\left(  t\right)    & =f\left(
t-x_{3}\right)  .
\end{align*}
Notice that \textrm{Ind}$_{\Gamma_{\alpha}}^{\Gamma_{\alpha}\rtimes H}\left(
\pi_{\lambda}|_{\Gamma_{\alpha}}\right)  $ is a reducible representation of
$\Gamma_{\alpha}$ since for $q<1$, the linear span of the set $
\left(  \pi_{\lambda
}|_{\Gamma_{\alpha}}\right)  \left(  \Gamma_{\alpha}\right)  \chi_{\left[
0,q\right]  }$ is not dense in $L^{2}\left(\mathbb{R}\right)  .$

It is easily seen that the closure of $\G_\a$ in $N$ is the group 
$$
N_0 = \{ \exp x_1 X_1 \exp x_2 X_2 \exp k X_3 : k \in \Z, x_1, x_2 \in \R\}.
$$
Put $P = P(\l) = \exp \left(\R X_1 + \R X_2\right)$. 
Since $\exp \Z X_3$ acts freely on $\hat P$ then  $\sigma_\l = \text{Ind}_P^{N_0} (\chi_\l)$ is irreducible by Mackey theory, as is $\sigma^t_\l:= \sigma_\l(\exp -tX_3 \cdot \exp t X_3), t \in \R$. Now by inducing in stages, $\pi_\l \simeq \text{Ind}_{N_0}^N(\sigma_\l)$, and a standard calculation shows then that 
$$
\pi_\l |_{N_0} \simeq \int_{[0,1)}^\oplus \ \sigma_\l^t \ dt.
$$
Since $\G_\a$ is dense in $N_0$, then $\sigma^t_\l |_{\G_\a}$ is also irreducible, so 
$$
\pi_\l |_{\G_\a} \simeq \int_{[0,1)}^\oplus \ \sigma_\l^t |_{\G_\a} \ dt
$$
is an irreducible decomposition of $\pi_\l |_{\G_\a}$. Finally, since $H$ acts freely on $\Gamma_\a$, then $ \text{ \textrm{Ind}}_{\Gamma_{\alpha}}^{\Gamma
_{\alpha}\rtimes H}(\sigma_\l^t |_{\G_\a}) $ is an irreducible representation of the wavelet group, and we obtain the irreducible decomposition
$$
W\simeq\int_{E }^{\oplus
}\oplus_{k\in\mathbb{I}}\text{ \textrm{Ind}}_{\Gamma_{\alpha}}^{\Gamma
_{\alpha}\rtimes H}\left(  \pi_{\lambda}|_{\Gamma_{\alpha}}\right)  \text{
}d\lambda \simeq \int_{E }^{\oplus
}\oplus_{k\in\mathbb{I}}\ \int_{[0,1)}^\oplus \ \text{ \textrm{Ind}}_{\Gamma_{\alpha}}^{\Gamma
_{\alpha}\rtimes H}(\sigma_\l^t |_{\G_\a}) dt \ d\l.
$$

\begin{example} \end{example}\noindent
Again let $N$ be the Heisenberg group with Lie algebra given as in the preceding. Now, let
$\alpha\in\mathrm{Aut}\left(  N\right)  $ be defined by
\begin{align*}
\alpha\left(  \exp X_1\right)    & =\exp X_1,\\
\alpha\left(  \exp X_2\right)    & =\exp\frac{X_2}{2},\\
\alpha\left(  \exp X_3\right)    & =\exp(2 X_3)
\end{align*}
and put
\[
\Gamma=\exp\mathbb{Z} X_1\exp\mathbb{Z}X_2\exp\mathbb{Z} X_3.
\]
Then
\[
W\simeq\int_{\mathbb{R}^{\ast}}^{\oplus}\int_{\mathbb{T}}^{\oplus}\text{ }\oplus_{k\in\mathbb{J}}\widetilde{\pi}
_{\lambda,\sigma}|_{G}\text{ }d\lambda\text{ }d\sigma.
\]
Moreover, $\widetilde{\pi}_{\lambda,\sigma}|_{G}$ is a representation of $G$
acting in $L^{2}\left(\mathbb{R}\right)  $ as follows
\begin{align*}
\widetilde{\pi}_{\lambda,\sigma}|_{G}\left(  \exp x_1 X_1\right)  f\left(
t\right)    & =e^{2\pi i\lambda x_1}f\left(  t\right)  \\
\widetilde{\pi}_{\lambda,\sigma}|_{G}\left(  \exp x_2 X_2\right)  f\left(
t\right)    & =e^{-2\pi i\lambda x_2 t}f\left(  t\right)  \\
\widetilde{\pi}_{\lambda,\sigma}|_{G}\left(  \exp x_3X_3\right)  f\left(
t\right)    & =f\left(  t-x_3\right)  \\
\widetilde{\pi}_{\lambda,\sigma}|_{G}\left(  \alpha\right)    & =e^{2\pi
i\sigma}\sqrt{2}f\left(  2t\right)  .
\end{align*}
 Now it is easy to show that the group $\Gamma_\a$ generated by the sets $\a^m(\G)$ is in fact dense in $N$. It follows that the representation $\pi_\l$ restricted to $\Gamma_\a$ is irreducible. Since 
$ \widetilde{\pi}_{\lambda,\sigma}|_{G} $ is an extension of $ \pi |_{\Gamma_\a} $, then it is also irreducible. The above is an irreducible decomposition. We refer to Lemma \ref{ab} and Proposition \ref{irr} for a general proof of this claim.

\
\section{Irreducibility of the Fiber Representations} 

In this section, we would like to obtain some conditions on the irreducibility of the restrictions of irreducible representations of $F$ to $G.$ We recall that a matrix is an \textbf{expansive matrix} if and only if all its eigenvalues have absolute values strictly greater than one. 
\begin{lemma} \label{ab} If $A$ is expansive then $\Gamma_{\alpha}$ is dense in $N$ in the subspace topology of $N.$ 
\end{lemma}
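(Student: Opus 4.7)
The plan is to prove density by first showing that the closure $\overline{\Gamma_\alpha}$ contains each of the one-parameter subgroups $\exp\mathbb{R}X_k$, and then invoking the fact that these one-parameter subgroups generate the connected Lie group $N$.

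First I would exploit the specific structure of $\Gamma$. Since $\Gamma = \exp\mathbb{Z}X_1 \exp\mathbb{Z}X_2 \cdots \exp\mathbb{Z}X_n$, each element $\exp(jX_k)$ with $j \in \mathbb{Z}$ belongs to $\Gamma$. Because $A$ acts diagonally with $AX_k = a_k X_k$, for every $m \in \mathbb{Z}$ we obtain
\[
\alpha^{-m}(\exp jX_k) = \exp(j a_k^{-m} X_k) \in \alpha^{-m}(\Gamma) \subseteq \Gamma_\alpha.
\]
Hence $\Gamma_\alpha$ contains $\exp(tX_k)$ for every $t$ in the set $D_k = \{ j a_k^{-m} : j, m \in \mathbb{Z}\}$.

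Next comes the key density step. Since $|a_k| > 1$ by the expansiveness hypothesis, we have $a_k^{-m} \to 0$ as $m \to +\infty$, so $D_k$ contains positive real numbers of arbitrarily small modulus. For each fixed $m$, $D_k$ contains the subgroup $a_k^{-m}\mathbb{Z}$; the existence within $D_k$ of arbitrarily small positive numbers then forces $D_k$ to be dense in $\mathbb{R}$ by the standard fact that a subgroup of $\mathbb{R}$ with elements of arbitrarily small positive absolute value is dense. Since $\exp$ is a homeomorphism from $\mathfrak{n}$ to $N$, continuity of $t \mapsto \exp(tX_k)$ gives $\exp(\mathbb{R}X_k) \subseteq \overline{\Gamma_\alpha}$ for every $k$.

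Finally, $\overline{\Gamma_\alpha}$ is a closed subgroup of the Lie group $N$, hence a Lie subgroup whose Lie algebra $\mathfrak{h} \subseteq \mathfrak{n}$ must contain each $X_k$ (since $\exp\mathbb{R}X_k$ lies inside this closed subgroup). As $\{X_1, \ldots, X_n\}$ is a basis of $\mathfrak{n}$, we conclude $\mathfrak{h} = \mathfrak{n}$, and therefore $\overline{\Gamma_\alpha} = N$ by connectedness of $N$. The main technical point is the density of $D_k$ in $\mathbb{R}$, which relies critically on $|a_k| > 1$; without expansiveness, $D_k$ could fail to accumulate at $0$ and the argument collapses.
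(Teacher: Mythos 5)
Your proof is correct in substance, but it takes a genuinely different route from the paper's. The paper works entirely in exponential coordinates via the map $c\left(\sum_{i}x_iX_i\right)=\exp(x_1X_1)\cdots\exp(x_nX_n)$: it approximates an arbitrary target $(x_1,\dots,x_n)$ simultaneously by a \emph{single} point of $A^{-m}(\mathbb{Z}X_1+\cdots+\mathbb{Z}X_n)$, synchronizing one common power $m=\max_i m_i$ across all coordinates (the step $j_i=l_ia_i^{m-m_i}$), and thus shows that the smaller set $\bigcup_{m}\alpha^{-m}(\Gamma)$ is already dense --- no products of group elements are used. You instead place each one-parameter subgroup $\exp\mathbb{R}X_k$ inside $\overline{\Gamma_\alpha}$ separately and then invoke the closed-subgroup theorem: $\overline{\Gamma_\alpha}$ is a closed subgroup whose Lie algebra contains the basis $\{X_1,\dots,X_n\}$, hence equals $N$ by connectedness. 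What each buys: your route avoids the coordinate synchronization entirely (and with it the implicit integrality of $l_ia_i^{m-m_i}$, which in the paper's argument is delicate when the $a_i$ are not integers), at the cost of leaning on the group structure of the closure and Cartan's theorem; the paper's argument is more elementary and constructive, exhibiting one dilated lattice point within $\varepsilon$ of any given element of $N$. One small repair in yours: $D_k=\{ja_k^{-m}: j,m\in\mathbb{Z}\}$ is a union of the subgroups $a_k^{-m}\mathbb{Z}$ but need not itself be a subgroup of $\mathbb{R}$ (try $a_k=3/2$: $1+\tfrac{2}{3}\notin D_k$), so the ``subgroup with arbitrarily small elements is dense'' fact does not literally apply; density of $D_k$ is immediate anyway, since each lattice $a_k^{-m}\mathbb{Z}$ is $|a_k|^{-m}$-dense in $\mathbb{R}$ and $|a_k|^{-m}\to 0$ as $m\to\infty$ precisely because $|a_k|>1$.
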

\begin{proof}
Let us assume that $A$ is expansive. It is enough to show that
$$\bigcup_{k\in\mathbb{Z}}A^{k}\left(\mathbb{Z}X_{1}+\cdots+\mathbb{Z}
X_{n}\right)  $$ is dense in $\mathfrak{n}$. Indeed if $c:\mathfrak{n}%
\rightarrow N$ is defined by
\[
c\left({\displaystyle\sum\limits_{i=1}^{n}}
x_{i}X_{i}\right)  =\exp\left(  x_{1}X_{1}\right)  \cdots\exp\left(
x_{n}X_{n}\right)
\]
then $$c\left(  \bigcup_{k\in\mathbb{Z}}A^{k}\left(\mathbb{Z}X_{1}+\cdots+\mathbb{Z}
X_{n}\right)  \right)  =\Gamma_{\alpha}.$$ For each eigenvalue $a_{i}$ of $A,$
$\left\{\mathbb{Z}a_{i}^{k}:k\in\mathbb{Z}\right\}  $ is dense in $\mathbb{R}.$ Thus, given $$x=c\left(  \sum_{i=1}^{n}x_{i}X_{i}\right)  \in N,$$ we have
$l_{1},\cdots,l_{n},m_{1},\cdots,m_{n}\in\mathbb{Z},\text{} m_{i}>0$ such that for every $\varepsilon>0,$ $$\left\vert \dfrac{l_{i}}%
{a_{i}^{m_{i}}}-x_{i}\right\vert <\varepsilon.$$ Put $m=\max_{i}m_{i}$ and let
$j_{i}=l_{i}a_i^{m-m_{i}}.$ Then for each $i,$ $\left\vert \frac{j_{i}}%
{a_{i}^{m}}-x_{i}\right\vert <\varepsilon$ since
\[
\dfrac{l_{i}}{a_{i}^{m_{i}}}=\frac{j_{i}a_{i}^{-m+m_{i}}}{a_{i}^{m_{i}}}%
=\dfrac{j_{i}}{a_{i}^{m}}.
\]
So with $k=-m,$%
\[
\left\Vert A^{k}\left(  j_{1}X_{1}+\cdots+j_{n}X_{n}\right)  -\sum_{i=1}%
^{n}x_{i}X_{i}\right\Vert _{\mathrm{max-norm}}<\varepsilon.
\]
The above norm is the max norm obtained by naturally identifying the Lie algebra $\n$ with the vector space $\R^n.$ 
\end{proof}

\begin{proposition}\label{irr}
If $A$ is expansive then $\pi_{\lambda}|_{\Gamma_{\alpha}}$ is irreducible.
\end{proposition}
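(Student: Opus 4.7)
The plan is to deduce irreducibility of the restriction from density of $\Gamma_\alpha$ in $N$ via a standard von Neumann algebra argument. By Lemma \ref{ab}, the assumption that $A$ is expansive guarantees that $\Gamma_\alpha$ is dense in $N$ in the subspace topology. I would then invoke the fact that any unitary representation of a topological group is strongly continuous, so that the operator-valued map $x \mapsto \pi_\lambda(x)$ from $N$ into $\mathcal{U}(\mathcal{H}_\lambda)$ is continuous in the strong operator topology. From density of $\Gamma_\alpha$ in $N$ it follows that every operator $\pi_\lambda(x)$ with $x \in N$ lies in the strong closure of $\pi_\lambda(\Gamma_\alpha)$.

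Next, I would take commutants. Since strong operator closure is contained in the double commutant, we have
\[
\pi_\lambda(N) \subseteq \overline{\pi_\lambda(\Gamma_\alpha)}^{\,\text{sot}} \subseteq \pi_\lambda(\Gamma_\alpha)''.
\]
Taking commutants reverses inclusions, so $\pi_\lambda(\Gamma_\alpha)' \subseteq \pi_\lambda(N)'$. The reverse inclusion $\pi_\lambda(N)' \subseteq \pi_\lambda(\Gamma_\alpha)'$ is immediate because $\Gamma_\alpha \subseteq N$. Therefore $\pi_\lambda(\Gamma_\alpha)' = \pi_\lambda(N)'$.

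Finally, since $\pi_\lambda$ is an irreducible unitary representation of $N$, Schur's lemma gives $\pi_\lambda(N)' = \mathbb{C}\,\mathrm{Id}_{\mathcal{H}_\lambda}$. Combined with the previous step, $\pi_\lambda(\Gamma_\alpha)' = \mathbb{C}\,\mathrm{Id}_{\mathcal{H}_\lambda}$, and another application of Schur's lemma shows that $\pi_\lambda|_{\Gamma_\alpha}$ is irreducible. There is no substantive obstacle here: the only nontrivial ingredient is Lemma \ref{ab}, which has already been established, and everything else reduces to the strong continuity of $\pi_\lambda$ together with the double commutant/Schur argument.
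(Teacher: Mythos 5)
Your argument is correct, but it reaches irreducibility by a different mechanism than the paper. Both proofs rest on the same two ingredients, namely Lemma \ref{ab} (density of $\Gamma_{\alpha}$ in $N$ when $A$ is expansive) and the strong continuity of $\pi_{\lambda}$, but the paper avoids von Neumann algebra machinery entirely: it fixes a nonzero $g\in L^{2}\left(\mathbb{R}^{d}\right)$, uses irreducibility of $\pi_{\lambda}$ to approximate an arbitrary $f$ by a finite combination $\sum_{k=1}^{m}c_{k}\pi_{\lambda}\left(x_{k}\right)g$ within $\varepsilon/2$, then replaces each $x_{k}$ by a nearby $\gamma_{k}\in\Gamma_{\alpha}$ using density and strong continuity, and concludes by a triangle-inequality estimate that every nonzero vector is cyclic for $\pi_{\lambda}\left(\Gamma_{\alpha}\right)$. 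You instead prove triviality of the commutant: $\pi_{\lambda}(N)$ lies in the strong closure of $\pi_{\lambda}\left(\Gamma_{\alpha}\right)$, hence in $\pi_{\lambda}\left(\Gamma_{\alpha}\right)''$, whence $\pi_{\lambda}\left(\Gamma_{\alpha}\right)'=\pi_{\lambda}(N)'=\mathbb{C}\,\mathrm{Id}$ by Schur's lemma; note that the commutant criterion applies to $\Gamma_{\alpha}$ as an abstract group, so no topology on $\Gamma_{\alpha}$ is needed, and your inclusions and the identity $S'=S'''$ are all used correctly. Your route is more structural and isolates the general principle that the restriction of a strongly continuous irreducible unitary representation to any dense subgroup is irreducible, while the paper's route is elementary and self-contained, requiring nothing beyond the definition of irreducibility via cyclic vectors. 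One caveat: your assertion that \emph{any} unitary representation of a topological group is strongly continuous is false as a general statement (an abstract homomorphism into the unitary group need not be continuous; continuity is a hypothesis, not a consequence). It is harmless here only because the paper defines $\hat{N}$ as the set of equivalence classes of \emph{strongly continuous} irreducible unitary representations, so $\pi_{\lambda}$ is strongly continuous by definition; you should cite that, rather than a purported general fact, as the source of continuity.
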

\begin{proof}
If $A$ is expansive then $\Gamma_{\alpha}$ is dense in $N$ in the subspace
topology. Now let $f$ be an arbitrary element of $L^{2}\left(\mathbb{R}^{d}\right)  .$ Given a non-zero vector $g,$ for any $\varepsilon>0,$ by the
irreducibility of $\pi_{\lambda}$ there exist $\left\{  x_{k}:1\leq k\leq
m\right\}  \subset N$ and $\left\{  c_{k}:1\leq k\leq m\right\}  \subset\mathbb{C}$ such that
\[
\left\Vert \sum_{k=1}^{m}c_{k}\pi_{\lambda}\left(  x_{k}\right)
g-f\right\Vert _{L^{2}\left(\mathbb{R}^{d}\right)  }<\frac{\varepsilon}{2}.
\]
Now, since $\Gamma_{\alpha}$ is dense in $N$ and since $\pi_{\lambda}$ is a
continuous representation, then there exists $\left\{  \gamma_{k}:1\leq k\leq
m\right\}  $ $\subset$ $\Gamma_{\alpha}$ such that
\[
\left\Vert \pi_{\lambda}\left(  x_{k}\right)  g-\pi_{\lambda}\left(
\gamma_{k}\right)  g\right\Vert <\frac{\varepsilon}{2m\left\vert
c_{k}\right\vert }.
\]
Now,
\begin{align*}
& \left\Vert \sum_{k=1}^{m}c_{k}\pi_{\lambda}\left(  \gamma_{k}\right)
g-f\right\Vert _{L^{2}\left(\mathbb{R}^{d}\right)  }\\
& =\left\Vert \sum_{k=1}^{m}c_{k}\pi_{\lambda}\left(  \gamma_{k}\right)
g-\sum_{k=1}^{m}c_{k}\pi_{\lambda}\left(  x_{k}\right)  g+\sum_{k=1}^{m}%
c_{k}\pi_{\lambda}\left(  x_{k}\right)  g-f\right\Vert _{L^{2}\left(\mathbb{R}^{d}\right)  }\\
& \leq\left\Vert \sum_{k=1}^{m}c_{k}\pi_{\lambda}\left(  \gamma_{k}\right)
g-\sum_{k=1}^{m}c_{k}\pi_{\lambda}\left(  x_{k}\right)  g\right\Vert
_{L^{2}\left(\mathbb{R}^{d}\right)  }+\left\Vert \sum_{k=1}^{m}c_{k}\pi_{\lambda}\left(
x_{k}\right)  g-f\right\Vert _{L^{2}\left(\mathbb{R}^{d}\right)  }\\
& \leq\sum_{k=1}^{m}\left\vert c_{k}\right\vert \left\Vert \pi_{\lambda
}\left(  \gamma_{k}\right)  g-\pi_{\lambda}\left(  x_{k}\right)  g\right\Vert
_{L^{2}\left(\mathbb{R}^{d}\right)  }+\left\Vert \sum_{k=1}^{m}c_{k}\pi_{\lambda}\left(
x_{k}\right)  g-f\right\Vert _{L^{2}\left(\mathbb{R}^{d}\right)  }\\
& \leq\sum_{k=1}^{m}\left\vert c_{k}\right\vert \frac{\varepsilon
}{2m\left\vert c_{k}\right\vert }+\frac{\varepsilon}{2}\\
& \leq\frac{\varepsilon}{2}+\frac{\varepsilon}{2}=\varepsilon.
\end{align*}
Thus, the linear span of $\pi_{\lambda}\left(  \Gamma_{\alpha}\right)  g$ is
dense in $L^{2}\left(\mathbb{R}^{d}\right)  $, and it follows that $\pi_{\lambda}|_{\Gamma_{\alpha}}$ is
irreducible. 
\end{proof}
\begin{example}\label{example}
\end{example}\noindent Let $N$ be a nilpotent Lie group with Lie algebra spanned by $\left\{
Z_{1},Z_{2},X_{1},X_{2},Y\right\}  $ such that the only non-trivial Lie
brackets are $\left[  X_{i},Y\right]  =Z_{i}.$ Now, let $\mathbf{e}=\left\{
3,5\right\}  .$ Then
\[
\Omega=\left\{
\begin{array}
[c]{c}%
\lambda_{1}Z_{1}^{\ast}+\lambda_{2}Z_{2}^{\ast}+\beta_{1}X_{1}^{\ast}%
+\beta_{2}X_{2}^{\ast}+\gamma Y^{\ast}\in\mathfrak{n}^{\ast}:\lambda_{1}%
\neq0\\
\left(  \lambda_{1},\lambda_{2},\beta_{1},\beta_{2},\gamma\right)  \in%
\mathbb{R}
^{5}%
\end{array}
\right\}
\]
and the unitary dual of $N$ is parametrized by
\[
\Lambda=\left\{  \lambda_{1}Z_{1}^{\ast}+\lambda_{2}Z_{2}^{\ast}+\beta
_{2}X_{2}^{\ast}\in\Omega:\left(  \lambda_{1},\lambda_{2},\beta_{2}\right)
\in%
\mathbb{R}
^{3}\right\}  \simeq%
\mathbb{R}
^{\ast}\times%
\mathbb{R}
^{2}.
\]
Now for each $\lambda\in\Lambda,$ $\pi_{\lambda}$ is realized as acting in
$L^{2}\left(
\mathbb{R}
\right)  $ as follows:%
\begin{align*}
\pi_{\lambda}\left(  \exp x_{1}X_{1}\right)  f\left(  y\right)    & =e^{2\pi
ix_{1}y\lambda_{1}}f\left(  y\right)  \\
\pi_{\lambda}\left(  \exp x_{2}X_{2}\right)  f\left(  y\right)    & =e^{2\pi
ix_{2}\left(  \beta_{2}+y\lambda_{2}\right)  }f\left(  y\right)  \\
\pi_{\lambda}\left(  \exp sY\right)  f\left(  y\right)    & =f\left(
y-s\right)  \\
\pi_{\lambda}\left(  \exp z_{k}Z_{k}\right)  f\left(  y\right)    & =e^{2\pi
iz_{k}\lambda_{k}}f\left(  y\right)  .
\end{align*}
Define
\[
\Gamma=\exp\left(
\mathbb{Z}
Z_{1}\right)  \exp\left(
\mathbb{Z}
Z_{2}\right)  \exp\left(
\mathbb{Z}
X_{1}\right)  \exp\left(
\mathbb{Z}
X_{2}\right)  \exp\left(
\mathbb{Z}
Y\right)  .
\]
Now, let $\alpha\in\mathrm{Aut}\left(  N\right)  \ $such that
\begin{align*}
& \alpha\left(  \exp\left(  z_{1}Z_{1}\right)  \exp\left(  z_{2}Z_{2}\right)
\exp\left(  x_{1}X_{1}\right)  \exp\left(  x_{2}X_{2}\right)  \exp\left(
yY\right)  \right)  \\
& =\exp\left(  4z_{1}Z_{1}\right)  \exp\left(  4z_{2}Z_{2}\right)  \exp\left(
2x_{1}X_{1}\right)  \exp\left(  2x_{2}X_{2}\right)  \exp\left(  2yY\right)  .
\end{align*}
Then
\[
W\simeq\int_{E}^{\oplus}\oplus_{k\in\mathbb{I}}\text{ }\mathrm{Ind}%
_{\Gamma_{\alpha}}^{\Gamma_{\alpha}\rtimes H}\left(  \pi_{\lambda}%
|_{\Gamma_{\alpha}}\right)  \text{ }d\lambda
\]
and $\mathrm{Ind}_{\Gamma_{\alpha}}^{\Gamma_{\alpha}\rtimes H}\left(
\pi_{\lambda}|_{\Gamma_{\alpha}}\right)  $ is irreducible for each $\lambda
\in E.$
\vskip 0.5cm

\subsection{Bekka-Driutti Condition for Irreducibility}
In this subsection, we will recall a result of Bekka and Driutti (see \cite{Bekka}.) Let $$\left\{  X_{1},\cdots
,X_{m},\cdots,X_{n}\right\}  $$ be a strong Malcev basis of $\mathfrak{n}$
strongly based on $\Gamma$ passing through $\left[  \mathfrak{n}%
,\mathfrak{n}\right]  $ such that $\dim\left[  \mathfrak{n},\mathfrak{n}%
\right]  =m.$  Let
$\widetilde{p}:\mathfrak{n}\rightarrow\left[  \mathfrak{n},\mathfrak{n}%
\right]  $ be the canonical projection. A subalgebra $\mathfrak{h}$ is not contained in a proper
rational ideal of $\mathfrak{n}$ if and only if $\widetilde{p}\left(
\mathfrak{h}\right)  $ is not contained in a proper rational subspace of
$\mathfrak{n/}\left[  \mathfrak{n},\mathfrak{n}\right]  .$ In fact, this is
the case if and only if there exists $$X=\sum_{i=1}^{n}x_{i}X_{i}%
\in\mathfrak{h}$$ such that $\dim_{%
\mathbb{Q}
}\left\{  x_{m+1},\cdots,x_{n}\right\}  =n-m.$ Moreover, the restriction
$\pi_{\lambda}|_{\Gamma}$ of $\pi_{\lambda}$ to $\Gamma$ is irreducible if and
only if the radical (see \ref{radical}) $\mathfrak{n}\left(  \lambda\right)  $ is not contained in
a proper rational ideal of $\mathfrak{n}$. Since $\Gamma
\subseteq\Gamma_{\alpha},$ then the following holds. 
\begin{proposition}
 If the radical $\mathfrak{n}\left(  \lambda\right)  $ is
not contained in a proper rational ideal of $\mathfrak{n}$ then $\pi_{\lambda
}|_{\Gamma_{\alpha}}$ is irreducible.
\end{proposition}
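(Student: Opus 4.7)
The plan is to reduce the claim directly to the Bekka--Driutti criterion that was recalled just before the statement, together with the obvious monotonicity of irreducibility under enlargement of the acting subgroup.

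First I would invoke the Bekka--Driutti theorem in the form stated above: under the hypothesis that $\mathfrak{n}(\lambda)$ is not contained in a proper rational ideal of $\mathfrak{n}$, the restriction $\pi_{\lambda}|_{\Gamma}$ is already irreducible. So the whole burden is transferred to comparing $\Gamma$ and $\Gamma_{\alpha}$.

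Next I would use the elementary observation that if $\Gamma\subseteq\Gamma_{\alpha}$ and $\pi_{\lambda}|_{\Gamma}$ is irreducible, then $\pi_{\lambda}|_{\Gamma_{\alpha}}$ is irreducible. This is immediate: for any nonzero $g\in L^{2}(\mathbb{R}^{d})$, the linear span of $\pi_{\lambda}(\Gamma)g$ is already dense by irreducibility of $\pi_{\lambda}|_{\Gamma}$, and a fortiori the linear span of the larger set $\pi_{\lambda}(\Gamma_{\alpha})g$ is dense. Equivalently, any closed subspace invariant under $\pi_{\lambda}(\Gamma_{\alpha})$ is in particular invariant under $\pi_{\lambda}(\Gamma)$, and hence is either $\{0\}$ or all of $L^{2}(\mathbb{R}^{d})$. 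Since $\Gamma=\exp\mathbb{Z}X_{1}\cdots\exp\mathbb{Z}X_{n}$ is contained in $\Gamma_{\alpha}=\langle\cup_{m\in\mathbb{Z}}\alpha^{m}(\Gamma)\rangle$ by construction (take $m=0$), the inclusion hypothesis is automatic.

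There is no real obstacle here; the statement is essentially a one-line corollary of Bekka--Driutti, and the only thing to verify is the trivial monotonicity of irreducibility, which I would simply state in one sentence. The proof can be written in two or three lines.
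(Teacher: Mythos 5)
Your argument is correct and is precisely the paper's own reasoning: the authors state the Bekka--Driutti criterion for irreducibility of $\pi_{\lambda}|_{\Gamma}$ and then deduce the proposition from the inclusion $\Gamma\subseteq\Gamma_{\alpha}$, which is exactly your monotonicity observation. Nothing is missing; your write-up just makes explicit the one-line invariant-subspace argument that the paper leaves implicit.
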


\begin{example}
\end{example}\noindent 
Let $N$ be the freely generated two step nilpotent Lie group with three
generators. The Lie algebra of $N$ is spanned by $
\left\{  Z_{23},Z_{13},Z_{12},Z_{3},Z_{2},Z_{1}\right\}$ such that the only non-trivial Lie brackets are defined as follows: $
\left[  Z_{i},Z_{j}\right]  =Z_{ij}\text{ for }i<j.$ Now let
\[
\Gamma=\exp\mathbb{Z}Z_{23}\exp\mathbb{Z}Z_{13}\exp\mathbb{Z}
Z_{12}\exp\mathbb{Z}Z_{3}\exp\mathbb{Z}Z_{2}\exp\mathbb{Z}Z_{1}.
\]
Thus, the basis above is a strong Malcev basis strongly based on $\Gamma$ and is
passing through the ideal $
\left[  \mathfrak{n,n}\right]  =\mathfrak{z.}$ It is not too hard to see that the radical corresponding to $\lambda$ is:
\[
\mathfrak{n}\left(  \lambda\right)  =\mathfrak{z\oplus}\text{ }\mathbb{R}
\left(  \lambda\left(  Z_{23}\right)  Z_{1}-\lambda\left(  Z_{13}\right)
Z_{2}+\lambda\left(  Z_{12}\right)  Z_{3}\right)  .
\]
It is clear that $\mathfrak{n}\left(  \lambda\right)  $ is not
contained in a proper rational ideal if
\[
\dim_{\mathbb{Q}}\left(  \lambda\left(  Z_{23}\right)  ,-\lambda\left(  Z_{13}\right)
,\lambda\left(  Z_{12}\right)  \right)  =3.
\]
Next, define $\alpha$ such that
\[
\alpha\left(  \exp Z_{k}\right)  =\left\{
\begin{array}
[c]{cc}%
\exp\left(  2Z_{k}\right)   & \text{ if }k=23\\
\exp\left(  2Z_{k}\right)   & \text{if }k=13\\
\exp\left(  Z_{k}\right)   & \text{if }k=12\\
\exp\left(  2Z_{k}\right)   & \text{if }k=3\\
\exp\left(  Z_{k}\right)   & \text{if }k=2\\
\exp\left(  Z_{k}\right)   & \text{if }k=1
\end{array}
\right.  .
\]
Then
\[
W\simeq\int_{E}^{\oplus}\oplus_{k\in\mathbb{I}}\text{ \textrm{Ind}}%
_{\Gamma_{\alpha}}^{\Gamma_{\alpha}\rtimes H}\left(  \pi_{\lambda}%
|_{\Gamma_{\alpha}}\right)  \text{ }d\lambda
\]
where
\[
E=\left\{  A^t\left(  \pm Z_{23}^{\ast}+\beta_{1}Z_{13}^{\ast}+\beta_{2}%
Z_{12}^{\ast}+\beta_{3}Z_{1}^{\ast}\right)  :t\in\left[  0,1\right)
,\beta_{k}\in\mathbb{R}\right\}
\]
and almost every representation $\pi_{\lambda}|_{\Gamma_{\alpha}}$
is irreducible. 
\section*{Acknowledgment} Brad Currey and Vignon Oussa would like to thank the Graduate Center of the City University of New York for their hospitality. We also thank the referee for providing constructive comments and help in improving the paper.

\end{document}